\title{Coarse Homotopy on metric Spaces and their Corona}
\author{Elisa Hartmann}
\begin{document}

\maketitle

\begin{abstract}
This paper discusses properties of the Higson corona by means of a quotient on coarse ultrafilters on a proper metric space. We use this description to show that the corona functor is faithful. This study provides a Künneth formula for twisted coarse cohomology. We obtain the Gromov boundary of a hyperbolic proper geodesic metric space as a quotient of its Higson corona.
\end{abstract}

\tableofcontents

\section{Introduction}

The corona $\corona X$ of a metric space $X$ has been introduced in \cite{Protasov2003} and studied in \cite{Protasov2005},\cite{Protasov2011},\cite{Banakh2013},\cite{Protasov2015},\cite{Grzegrzolka2018a},\cite{Hartmann2019a}.

The Stone-\v Cech compactification is a functor $\beta$ from the category of completely regular spaces to the category of compact Hausdorff spaces. Note that by \cite[Theorem~2.1]{Baladze2018} if $X$ is a completely regular space and $G$ a group then
\[
\hat H^n_F(X;G)=\cohomology n {\beta X} G
\]
The left side denotes $n$-dimensional \v Cech type functional cohomology based on finite open covers and the right side denote $n$-dimensional \v Cech cohomology.

This resembles \cite[Corollary~35]{Hartmann2019a} where sheaf cohomology based on finite coarse covers of a metric space $X$ is related to sheaf cohomology on the corona $\corona X$. This property and other properties which we are going to discuss in this paper suggest that the corona functor is the Stone-\v Cech boundary version of a space in the coarse category.

We start with the first quite elementary property:

\begin{thma}
If $\mcoarse$ denotes the category of metric spaces and coarse maps modulo close and $\topology$ the category of topological spaces and continuous maps then the functor
\[
\nu':\mcoarse\to \topology
\]
is faithful.
\end{thma}

A direct consequence of this result is that $\nu'$ reflects isomorphisms.

We examine in which way the corona functor $\nu'$ is related to the Higson corona $\nu$ of \cite{Roe2003}. Originally the Higson corona has been defined on a proper metric space $X$ as the boundary of the compactification detemined by an algebra of bounded functions called the Higson functions. Already \cite{Protasov2005} showed that there exists a homeomorphism $\nu (X) =\corona X$. We provide an explicit homeomorphism and show $\nu,\nu'$ agree on morphisms too.

\begin{thma}
If $X$ is a proper metric space then there is a homeomorphism 
\[
\corona X\to \nu(X).
\]
Here the right side denote the Higson corona of \cite{Roe2003}. If $f:X\to Y$ is a coarse map between proper metric spaces then $\corona f,\nu(f)$ are homeomorphic (the same map pre-and postcomposed by a homeomorphism).
\end{thma}

The asymptotic product of two metric spaces has been introduced in \cite{Hartmann2019b} as the limit of a pullback diagram in the coarse category. Note \cite[Theorem 1]{Foertsch2003} shows the following: If $X,Y$ are hyperbolic coarsely proper coarsely geodesic metric spaces then $X\ast Y$ is hyperbolic coarsely proper coarsely geodesic and therefore its Gromov boundary $\partial (X\ast Y)$ is defined. There is a homeomorphism $\partial(X\ast Y)=\partial (X) \times \partial (Y)$ which is the main result of \cite{Foertsch2003}.

This paper shows if $X,Y$ are metric spaces then there is a homeomorphism $\corona X \times \corona Y=\corona{X\times Y}$. If $Y$ is coarsely geodesic coarsely proper then $\corona{X\ast Y}$ is the pullback of
\[
\xymatrix{
& \corona Y\ar[d]^{\corona {d(\cdot,q)}}\\
\corona X\ar[r]_{\corona{d(\cdot,p)}}
&\corona {\Z_+}
}
\]
Here $p\in X,q\in Y$ denote fixed points. Thus $\nu'$ preserves limits of this type. We obtain a coarse version of a Künneth formula for coarse cohomology with twisted coefficients:

\begin{thma}\name{Künneth formula}
Let $X,Y$ be metric spaces, $\sheaff$ a sheaf on $X$ and $\sheafg$ a sheaf on $Y$. Define a presheaf $\sheaff'$ on $X\times Y$ by
\[
U\mapsto \sheaff(p_1(U)).
\]
Then $\sheaff'$ is a sheaf on $X\times Y$. Similarly we can define a sheaf $\sheafg'$ on $X\times Y$. There is a homomorphism
\[
\bigoplus_{p+q=n}\cohomology p X \sheaff \otimes \cohomology q Y \sheafg \to \cohomology n {X\times Y} {\sheaff'\otimes \sheafg'}  
\]
Here $\sheaff'\otimes\sheafg'$ denotes the sheaf associated to the presheaf $U\mapsto \sheaff'(U)\otimes \sheafg'(U)$ for $U\s X\times Y$. If there is a $\sheaff$-acyclic coarse cover $\ucover$ of $X$ and a $\sheafg$-acyclic coarse cover $\vcover$ of $Y$ such that $\check C^q(\vcover,\sheafg)$ is torsion free for every $q$ and $\cohomology p \ucover\sheaff$ is torsion free for every $p$ then the homomorphism is an isomorphism.
\end{thma}

If $X$ is a hyperbolic proper geodesic metric space its Gromov boundary $\partial (X)$ is defined \cite{Benakli2002}. Since every Gromov function is a Higson function the Gromov boundary arises as a quotient of the Higson corona \cite{Roe2003}. We provide an explicit description of the quotient map and the induced topology on $\partial(X)$.

\begin{thma}
Let $X$ be a proper geodesic hyperbolic metric space. The relation $\sheaff\sim\sheafg$ if $\sheaff,\sheafg\in \closedop {\rho(\Z_+)}$ for some coarsely injective coarse map $\rho:\Z_+\to X$ is an equivalence relation on coarse ultrafilters and the mapping
\f{
q_X:\corona X&\to\partial (X)\\
\sheaff&\mapsto [\rho]\;\;\; \sheaff\in\closedop{\rho(\Z_+)}
}
to the Gromov boundary $\partial (X)$ of $X$ is continuous and respects $\sim$. The induced map on the quotient associated to $\sim$ is a homeomorphism.

If $A\s X$ is a subset then
\[
\partial_X A:=\{[\rho]:\rho(\Z_+)\close A\}
\]
is closed in $\partial (X)$. The $((\partial_X A)^c)_{A\s X}$ constitute a basis for the topology on $\partial (X)$. 
\end{thma}

This result implies there is a larger class of morphisms in the coarse category for which the Gromov boundary is a functor. Originally coarse equivalences were shown to induce continuous maps between Gromov boundaries. If $f:X\to Y$ is a coarse map between hyperbolic proper geodesic metric spaces with the property that for every coarsely injective coarse map $\rho:\Z_+\to X$ the map $f\circ \rho$ is coarsely injective coarse then $f$ induces a map between Gromov boundaries.

\section{Metric Spaces}

\label{sec:metric}
\begin{defn}
 Let $(X,d)$ be a metric space. Then the \emph{coarse structure associated to $d$} on $X$ consists of those subsets $E\s X^2$ for which
 \[
  \sup_{(x,y)\in E}d(x,y)<\infty.
 \]
 We call an element of the coarse structure \emph{entourage}.  In what follows we assume the metric $d$ to be finite for every $(x,y)\in X^2$.
\end{defn}

\begin{defn}
 A map $f:X\to Y$ between metric spaces is called \emph{coarse} if
 \begin{itemize}
  \item $E\s X^2$ being an entourage implies that $\zzp f E$ is an entourage \emph{(coarsely uniform)};
  \item and if $A\s Y$ is bounded then $\iip f A$ is bounded \emph{(coarsely proper)}.
 \end{itemize}
 Two maps $f,g:X\to Y$ between metric spaces are called \emph{close} if
 \[
 f\times g(\Delta_X)
 \]
 is an entourage in $Y$. Here $\Delta_X$ denotes the diagonal in $X^2$.
\end{defn}

\begin{notat}
 A map $f:X\to Y$ between metric spaces is called
 \begin{itemize}
 \item \emph{coarsely surjective} if there is an entourage $E\s Y^2$ such that 
 \[
  E[\im f]=Y;
 \]
 \item \emph{coarsely injective} if for every entourage $F\s Y^2$ the set $\izp f F$ is an entourage in $X$.
\end{itemize}
  Two subsets $A,B\s X$ are called \emph{not coarsely disjoint} if there is an entourage $E\s X^2$ such that the set
  \[
  E[A]\cap E[B]
  \]
  is not bounded. We write $A\close B$ in this case.
  
  Two subsets $A,B\s X$ are called \emph{asymptotically alike} if there is an entourage $E\s X^2$ such that
  \[
  E[A]=B.
  \]
  We write $A\lambda B$ in this case.
\end{notat}

\begin{rem}
 We study metric spaces up to coarse equivalence. A coarse map $f:X\to Y$ between metric spaces is a \emph{coarse equivalence} if
 \begin{itemize}
  \item There is a coarse map $g:Y\to X$ such that $f\circ g$ is close to $id_Y$ and $g\circ f$ is close to $id_X$.
 \item or equivalently if $f$ is both coarsely injective and coarsely surjective.
 \end{itemize}
\end{rem}

\begin{defn}
A metric space is called \emph{coarsely proper} if it is coarsely eqivalent to a proper metric space. It is called \emph{coarsely geodesic} if it is coarsely equivalent to a geodesic metric space.
\end{defn}

\begin{notat}
If $X$ is a metric space and $U_1,\ldots, U_n\s X$ are subsets then $(U_i)_i$ are said to \emph{coarsely cover} $X$ if for every entourage $E\s X^2$ the set 
\[
E[U_1^c]\cap \cdots \cap E[U_n^c]
\]
is bounded.
\end{notat}

\section{The Corona Functor}

\begin{defn}
\label{defn:cu}
If $X$ is a metric space a system $\sheaff$ of subsets of $X$ is called a \emph{coarse ultrafilter} if
\begin{enumerate}
\item $A,B\in \sheaff$ then $A\close B$.
\item $A,B\s X$ are subsets with $A\cup B\in\sheaff$ then $A\in\sheaff$ or $B\in\sheaff$.
\item $X\in\sheaff$.
\end{enumerate}
\end{defn}

\begin{lem}
\label{lem:cucoarsemap}
If $f:X\to Y$ is a coarse map between metric spaces and $\sheaff$ is a coarse ultrafilter on $X$ then
\[
f_*\sheaff:=\{A\s Y:\iip f A\in\sheaff\}
\]
is a coarse ultrafilter on $Y$.
\end{lem}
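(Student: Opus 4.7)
The plan is to verify the three axioms in Definition~\ref{defn:cu} for $f_*\sheaff$, working through them in the order (3), (2), (1), since the first two are purely formal while the last is the one that genuinely uses both defining properties of a coarse map.

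For axiom (3), note $\iip f Y = X \in \sheaff$, so $Y \in f_*\sheaff$. For axiom (2), if $A\cup B \in f_*\sheaff$ then $\iip f {A\cup B}=\iip f A\cup \iip f B\in\sheaff$, and applying axiom (2) of $\sheaff$ to this union yields $\iip f A\in\sheaff$ or $\iip f B\in\sheaff$, i.e.\ $A\in f_*\sheaff$ or $B\in f_*\sheaff$. These steps need only the elementary fact that $\iip f \cdot$ commutes with unions.

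The nontrivial axiom is (1). Suppose $A,B\in f_*\sheaff$, so $\iip f A,\iip f B\in\sheaff$ and thus $\iip f A\close\iip f B$ in $X$. Pick an entourage $E\s X^2$ with $E[\iip f A]\cap E[\iip f B]$ unbounded in $X$. I would then set $F:=\zzp f E\s Y^2$, which is an entourage because $f$ is coarsely uniform. The key inclusion is
\[
f\bigl(E[\iip f A]\cap E[\iip f B]\bigr)\s F[A]\cap F[B],
\]
obtained directly from the definitions: an element $x$ in the left-hand intersection comes with witnesses $a\in \iip f A$ and $b\in\iip f B$ such that $(x,a),(x,b)\in E$, whence $(f(x),f(a)),(f(x),f(b))\in F$ with $f(a)\in A$, $f(b)\in B$.

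It remains to see $F[A]\cap F[B]$ is unbounded in $Y$. Here the coarse properness of $f$ enters: if the image $f(E[\iip f A]\cap E[\iip f B])$ were contained in a bounded set $C\s Y$, then $E[\iip f A]\cap E[\iip f B]\s \iip f C$ would be bounded, contradicting the choice of $E$. Thus $F[A]\cap F[B]$ contains an unbounded subset, so $A\close B$ in $Y$. This completes the verification; the only real content is the interplay of coarse uniformity (to transport the entourage $E$ to $F$) with coarse properness (to rule out a bounded image), and I expect packaging this cleanly to be the main, though modest, obstacle.
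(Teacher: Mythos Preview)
Your proof is correct: the verification of axioms~(2) and~(3) is straightforward, and for axiom~(1) you correctly use coarse uniformity to push the entourage forward and coarse properness to ensure the intersection remains unbounded. The paper itself does not give a proof but simply refers to \cite{Hartmann2019a}, so there is no argument to compare against; your direct verification is exactly the kind of argument one would expect to find in that reference.
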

\begin{proof}
see \cite{Hartmann2019a}.
\end{proof}

\begin{defn}
\label{defn:asymptoticallyalikecu}
We define a relation on coarse ultrafilters on $X$: two coarse ultrafilters $\sheaff,\sheafg$ are \emph{asymptotically alike}, written $A\lambda B$ if for every $A\in \sheaff,B\in \sheafg$:
\[
A\close B.
\]
\end{defn}

\begin{rem}
By \cite{Hartmann2019a} the relation $\lambda$ is an equivalence relation on coarse ultrafilters on $X$. If two coarse ultrafilters $\sheaff,\sheafg$ on $X$ are asymptotically alike and $f:X\to Y$ is a coarse map to a metric space $Y$ then $f_*\sheaff\lambda f_*\sheafg$ on $Y$.
\end{rem}

\begin{defn}
\label{defn:topologyoncu}
Let $X$ be a metric space. Denote by $\nu'(X)$ the set of coarse ultrafilters modulo asymptotically alike on $X$. The relation $\close$ on subsets of $\corona X$ is defined as follows: Define for a subset $A\s X$:
\[
\closedop A=\{[\sheaff]\in\corona X:A\in \sheaff\}
\]
Then $\pi_1\notclose \pi_2$ if and only if there exist subsets $A,B\s X$ such that $A\notclose B$ and $\pi_1\s \closedop A,\pi_2\s\closedop B$. 
\end{defn}

\begin{rem}
The relation $\close$ on subsets of $\corona X$ defines a proximity relation on $\corona X$ which induces a compact topology. By \cite{Hartmann2019a} the mapping $f_*$ between coarse ultrafilters induces a continuous map $\corona f$ between the quotients. Thus $\nu'$ is a functor mapping coarse metric spaces to compact topological spaces.

The topology on $\corona X$ is generated by $(\closedop A)^c_{A\s X}$. Coarse covers determine the finite open covers \cite{Hartmann2019a}.
\end{rem}

\section{On Morphisms}

\begin{lem}
\label{lem:coarseproximitymaps}
Let $f:X\to Y$ be a map between metric spaces. Then
\begin{enumerate}
\item $f$ is a coarse map if 
\begin{itemize}
\item $B\s X$ is bounded then $f(B)$ is bounded.
\item if for every subsets $A,B\s X$ the relation $A\close B$ implies $f(A)\close f(B)$.
\end{itemize}
\item if $f$ is coarse then $f$ is coarsely injective if $A\notclose B$ implies $f(A)\notclose f(B)$.
\item $f$ is coarsely surjective if the relation $f(X)\notclose B$ in $Y$ implies $B$ is bounded.
\end{enumerate}
\end{lem}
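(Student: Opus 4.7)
The plan for all three parts is to first reformulate the proximity relation in terms of sequences---$A\close B$ holds in $X$ if and only if there exist unbounded sequences $(a_n)\s A$ and $(b_n)\s B$ with $\sup_n d(a_n,b_n)<\infty$---and then argue each part by contradiction, with the main technical device being an inductive thinning that forces cross-distances to diverge.

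For part (1) I would first extract coarse properness essentially for free from hypothesis (ii): any unbounded $A\s X$ satisfies $A\close A$ via the diagonal entourage, hence $f(A)\close f(A)$ by (ii), forcing $f(A)$ unbounded; taking $A=f^{-1}(C)$ for bounded $C\s Y$ then yields coarse properness. Coarse uniformity requires real work. Assuming it fails, I would pick $(x_n,x_n')$ in some entourage $E$ with $d(f(x_n),f(x_n'))\to\infty$. A bounded $\{x_n\}$ is ruled out immediately by hypothesis (i). Otherwise I would pass to a subsequence along which $f(x_n)\to\infty$ (using coarse properness) and split on whether $\{f(x_n')\}$ has a bounded subsequence: if so, one obtains $A=\{x_{n_k}\}$ and $B=\{x_{n_k}'\}$ with $A\close B$ but $f(A)\notclose f(B)$ because $f(B)$ is bounded; if not, a further inductive thinning arranges $d(f(x_{n_k}),f(x_{n_j}'))\to\infty$ for $k\neq j$, again yielding $f(A)\notclose f(B)$. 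Either case contradicts (ii).

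Part (2) follows the same pattern. Assuming $f$ is coarse and $A\notclose B\Rightarrow f(A)\notclose f(B)$, if $f$ fails to be coarsely injective there exist $(x_n,x_n')$ with $d(f(x_n),f(x_n'))$ bounded and $d(x_n,x_n')\to\infty$. A bounded $\{x_n\}$ would make $\{f(x_n)\}$, then $\{f(x_n')\}$, then $\{x_n'\}$ bounded (by coarse uniformity and coarse properness of $f$), contradicting $d(x_n,x_n')\to\infty$. So both $\{x_n\}$ and $\{x_n'\}$ are unbounded, and I would inductively thin in $X$ so that $d(x_{n_k},x_{n_j}')\to\infty$ as $k,j\to\infty$ with $k\neq j$. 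Then $A=\{x_{n_k}\}$ and $B=\{x_{n_k}'\}$ satisfy $A\notclose B$, while the paired bound $d(f(x_{n_k}),f(x_{n_k}'))\leq C$ together with $\{f(x_{n_k})\}$ unbounded (by coarse properness) gives $f(A)\close f(B)$, contradicting the hypothesis.

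For part (3), if $f$ is not coarsely surjective I would choose $y_n\in Y$ with $d(y_n,f(X))\to\infty$; then $B=\{y_n\}$ is unbounded, and for any entourage $E'\s Y^2$ of width $R$, a point of $E'[f(X)]\cap E'[B]$ lies within $R$ of some $y_n$ with $d(y_n,f(X))\leq 2R$, of which there are only finitely many, so $E'[f(X)]\cap E'[B]$ is bounded. Hence $f(X)\notclose B$ with $B$ unbounded, contradicting the hypothesis. The main obstacle across parts (1) and (2) is the inductive thinning construction: at each step one has to verify that cofinitely many candidate indices $n$ satisfy the required separation from the finite set of previously chosen (image) points, which rests on the relevant sequences escaping to infinity---a property that needs to be arranged by passing to an appropriate subsequence in advance.
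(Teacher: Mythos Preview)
Your argument is correct, but it takes a markedly different route from the paper's. The paper does not work with sequences at all. For part (1), it derives coarse properness essentially as you do (via $A$ unbounded $\Leftrightarrow A\close X$), but for coarse uniformity it argues at the level of the relation $\lambda$ (``asymptotically alike'', i.e.\ finite Hausdorff distance): from $f(A)\,\bar\lambda\,f(B)$ it extracts an unbounded $S\subseteq f(A)$ with $S\notclose f(B)$, pulls this back via the contrapositive of (ii) to obtain an unbounded subset of $A$ not close to $B$, and concludes $A\,\bar\lambda\,B$. The implication ``$A\lambda B\Rightarrow f(A)\lambda f(B)$'' is then fed into \cite[Theorem~2.3]{Kalantari2016} to obtain coarse uniformity. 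Parts (2) and (3) are dispatched by citation to \cite[Lemma~41]{Hartmann2019a} and by ``easy'', respectively.

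So the paper's proof is short and conceptual but leans on two external results, whereas your approach is fully self-contained and elementary, paying for this with the inductive thinning constructions. The thinning is the genuine workhorse in your version, and you have identified the right precondition for it: one must first pass to a subsequence along which \emph{both} relevant (image) sequences escape to infinity. You might make explicit that in part (2), after arranging $x_{n_k}\to\infty$, the boundedness of $\{x_{n_k}'\}$ is again ruled out by the same coarse-properness/uniformity chain you used for $\{x_n\}$, so a further subsequence with $x_{n_k}'\to\infty$ is available; once both go to infinity the greedy thinning goes through and yields $d(x_{n_k},x_{n_j}')\ge\max(k,j)$ for $k\neq j$, which together with $d(x_{n_k},x_{n_k}')\to\infty$ gives $A\notclose B$.
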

\begin{proof}
\begin{enumerate}
\item First we show $f$ is coarsely proper. If $B\s Y$ is bounded then $B\notclose Y$. This implies $\iip f B\notclose X$. Thus $\iip f B$ is bounded.

Now we show $f$ is coarsely uniform: Suppose $A,B\s X$ are two subsets with $f(A)\bar \lambda f(B)$. Then there is an unbounded subset $S\s f(A)$ with $S\notclose f(B)$ or there is an unbounded subset $T\s f(B)$ with $T\notclose f(A)$. Assume the former. Then $\iip f S\notclose B$. Since $f$ maps bounded sets to bounded sets the set $\iip f S\cap A$ is unbounded. Thus $A\bar\lambda B$. Thus we have shown $A\lambda B$ implies $f(A)\lambda f(B)$. By \cite[Theorem~2.3]{Kalantari2016} we can conclude that $f$ is coarsely uniform.
\item This is \cite[Lemma~41]{Hartmann2019a}.
\item easy.
\end{enumerate}
\end{proof}

\begin{thm}
If $f,g:X\to Y$ are two coarse maps between metric spaces and $\corona f=\corona g$ then $f,g$ are close.
\end{thm}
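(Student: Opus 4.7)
The plan is to prove the contrapositive: assume $f,g$ are not close and construct a coarse ultrafilter $\sheaff$ on $X$ with $\corona f([\sheaff])\ne\corona g([\sheaff])$. From $\sup_{x\in X}d(f(x),g(x))=\infty$, pick $x_n$ with $d(f(x_n),g(x_n))>n$. The set $S=\{x_n\}$ must be unbounded, for otherwise coarse uniformity forces $f(S)$ and $g(S)$ to be bounded and hence $\sup_n d(f(x_n),g(x_n))<\infty$. Fixing a basepoint $p\in X$ and passing to a subsequence, I may also arrange $d(x_n,p)>n$, so that every bounded subset of $X$ meets only finitely many of the $x_n$.

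Next I build by induction a subsequence $y_k=x_{n_k}$ with the separation property that for all $j<k$,
\[
d(f(y_k),g(y_j))>k\quad\text{and}\quad d(f(y_j),g(y_k))>k,
\]
together with the inherited $d(f(y_k),g(y_k))>n_k\ge k$. At stage $k+1$, the forbidden indices $m$ are those with $x_m$ in one of the preimages $f^{-1}(B(g(y_j),k+1))$ or $g^{-1}(B(f(y_j),k+1))$ for some $j\le k$. Coarse properness of $f$ and $g$ makes each such preimage bounded, and by the preceding reduction each contains only finitely many $x_m$, so a permissible $n_{k+1}$ always exists.

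Set $T=\{y_k:k\in\Z_+\}$. I claim $f(T)\notclose g(T)$ in $Y$. For any $R>0$, a point of $E_R[f(T)]\cap E_R[g(T)]$ (with $E_R=\{(y,y'):d(y,y')\le R\}$) lies in $B(f(y_k),R)\cap B(g(y_l),R)$ for some $(k,l)$ with $d(f(y_k),g(y_l))\le 2R$; the separation property forces $\max(k,l)<2R$, so only finitely many such pairs occur and the intersection is a finite union of $R$-balls, hence bounded.

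Finally, extend $T$ to a coarse ultrafilter $\sheaff$ on $X$ by the standard Zorn argument (cf.\ \cite{Hartmann2019a}). Upward closure under inclusion yields $f(T)\in f_*\sheaff$ and $g(T)\in g_*\sheaff$; combined with $f(T)\notclose g(T)$ these witness $f_*\sheaff\bar\lambda g_*\sheaff$, so $\corona f([\sheaff])\ne\corona g([\sheaff])$, contradicting $\corona f=\corona g$. The real work is the inductive construction of $T$; once the reduction to $x_n$ escaping bounded sets is in place, coarse properness turns each inductive obstruction into a finite set, and the separation property then forces $f(T)$ and $g(T)$ apart in $Y$.
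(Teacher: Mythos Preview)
Your argument is correct and follows the same overall contrapositive strategy as the paper: produce an unbounded set $T\subseteq X$ with $f(T)\notclose g(T)$, place $T$ inside a coarse ultrafilter, and observe that $f_*\sheaff$ and $g_*\sheaff$ then fail to be asymptotically alike. The final step, using upward closure to get $f(T)\in f_*\sheaff$ and $g(T)\in g_*\sheaff$, is exactly what the paper does as well.

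The genuine difference lies in how the separating set is obtained. The paper outsources this step: it invokes \cite[Proposition~2.15]{Kalantari2016} to obtain $A\subseteq X$ with $f(A)\bar\lambda g(A)$, and then extracts $S\subseteq A$ with $f(S)\notclose g(S)$. You instead build $T$ by hand, using an explicit inductive thinning of a sequence $(x_n)$ witnessing that $f,g$ are not close, with coarse properness ensuring that only finitely many indices are excluded at each stage. Your route is more elementary and fully self-contained (no appeal to the asymptotic-resemblance machinery of Kalantari--Honari), at the price of a longer argument; the paper's route is shorter but relies on an external black box. Both lead to the same endgame, and both then cite the standard Zorn-type existence of a coarse ultrafilter containing a prescribed unbounded set (the paper cites \cite[Proposition~4.7]{Grzegrzolka2018a}).
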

\begin{proof}
Suppose $f,g$ are not close. By \cite[Proposition~2.15]{Kalantari2016} there is a subset $A\s X$ with $f(A)\bar\lambda g(A)$. This implies there is a subset $S\s A$ with $f(S)\notclose g(S)$. Now by \cite[Proposition~4.7]{Grzegrzolka2018a} there is a coarse ultrafilter $\sheaff$ on $X$ with $S\in\sheaff$. Then $f(S)\in f_*\sheaff$ and $g(S)\in g_*\sheaff$. Since $f(S)\notclose g(S)$ this implies $f_*\sheaff\not= g_*\sheaff$. Thus $\corona f,\corona g$ are not the same map.
\end{proof}

\begin{cor}
\label{cor:faithful}
If $\mcoarse$ denotes the category of metric spaces and coarse maps modulo close and $\topology$ the category of topological spaces and continuous maps then the functor
\[
\nu':\mcoarse\to \topology
\]
is faithful.
\end{cor}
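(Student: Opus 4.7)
The plan is to observe that this corollary is essentially a direct reformulation of the theorem immediately preceding it. Faithfulness of the functor $\nu'$ means that for any two objects $X, Y$ in $\mcoarse$, the induced map on morphism sets
\[
\nu'_{X,Y}: \Hom_{\mcoarse}(X,Y) \to \Hom_{\topology}(\nu'(X),\nu'(Y))
\]
is injective. Unpacking definitions, a morphism in $\mcoarse$ is an equivalence class of coarse maps modulo closeness, so I must show: whenever $f, g : X \to Y$ are coarse maps with $\corona f = \corona g$ as continuous maps, the maps $f$ and $g$ are close.

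This is precisely the statement of the preceding theorem. So my proof would simply take two representatives $f, g : X \to Y$ of classes $[f], [g] \in \Hom_{\mcoarse}(X,Y)$ with $\nu'([f]) = \nu'([g])$, note that by definition $\corona f$ and $\corona g$ are equal as continuous maps on $\corona X \to \corona Y$, then invoke the theorem to conclude $f$ and $g$ are close, so $[f]=[g]$.

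There is no real obstacle; the work has already been done. The only thing worth remarking on is the well-definedness of $\nu'$ at the level of morphisms of $\mcoarse$, which is guaranteed by the remark following Definition \ref{defn:topologyoncu} (close maps induce the same continuous map on the corona, since $f_*\sheaff \lambda g_*\sheaff$ when $f$ and $g$ are close). So the proof is essentially a one-line citation of the theorem, which I would write explicitly to make the logical flow transparent.
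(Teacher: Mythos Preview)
Your proposal is correct and matches the paper's treatment: the corollary is stated without proof because it is an immediate restatement of the preceding theorem, exactly as you describe. Your added remark about well-definedness of $\nu'$ on morphisms is a reasonable clarification but not strictly needed.
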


\begin{cor}
\label{cor:nureflectsepisandmonos}
The functor $\nu':\mcoarse\to \topology$ reflects epimorphisms and monomorphisms.
\end{cor}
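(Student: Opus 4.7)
The plan is to recognise this as an instance of the purely categorical fact that any faithful functor reflects both monomorphisms and epimorphisms, and then to simply invoke \Cref{cor:faithful}. No further coarse-geometric input is required.

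In detail, suppose $f:X\to Y$ is a morphism in $\mcoarse$ such that $\nu'(f)$ is a monomorphism in $\topology$. To see that $f$ itself is a monomorphism, I would take any pair of morphisms $g,h:W\to X$ with $f\circ g = f\circ h$ (modulo close), apply the functor $\nu'$ to get $\nu'(f)\circ \nu'(g)=\nu'(f)\circ \nu'(h)$, cancel $\nu'(f)$ on the left because it is a monomorphism in $\topology$, and finally deduce $g=h$ in $\mcoarse$ from $\nu'(g)=\nu'(h)$ by faithfulness (\Cref{cor:faithful}).

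The argument for epimorphisms is dual and I would write it out in the same way: if $\nu'(f)$ is epi and $g\circ f = h\circ f$ for $g,h:Y\to Z$ in $\mcoarse$, then $\nu'(g)\circ \nu'(f)=\nu'(h)\circ\nu'(f)$, so $\nu'(g)=\nu'(h)$ in $\topology$, and again faithfulness gives $g=h$.

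There is no real obstacle here. The only thing worth double-checking is that the category $\mcoarse$ (coarse maps modulo close) really is a category in which composition of equivalence classes is well-defined, so that the cancellation manipulations above make sense; this was already implicit in the definition of $\nu'$ as a functor on $\mcoarse$ in the previous section, so nothing new needs to be verified.
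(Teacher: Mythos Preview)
Your proposal is correct and matches the paper's own proof exactly: the paper simply cites the general categorical fact that faithful functors reflect epimorphisms and monomorphisms (referencing \cite[Exercise~1.6.vii]{Riehl2017}) and then invokes Corollary~\ref{cor:faithful}. Your version merely unfolds the standard cancellation argument behind that fact, which is fine.
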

\begin{proof}
It is general theory that a faithful functor reflects epimorphisms and monomorphisms. This fact can also be found in \cite[Exercise~1.6.vii]{Riehl2017}. Since by Corollary~\ref{cor:faithful} the functor $\nu'$ is faithful the result follows.
\end{proof}

\begin{cor}
\label{cor:nureflectsisos}
The functor $\nu':\mcoarse\to \topology$ reflects isomorphisms.
\end{cor}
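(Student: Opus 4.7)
The plan is to prove the stronger statement that if $\corona f$ is a homeomorphism then $f$ is both coarsely injective and coarsely surjective, hence a coarse equivalence by the characterization recorded in Section~\ref{sec:metric}. The task is to push the bijectivity of $\corona f$ back down to the relation $\close$ on subsets of $X$ and $Y$, using Lemma~\ref{lem:coarseproximitymaps}.

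Coarse surjectivity is direct. For an unbounded $B \s Y$, \cite[Proposition~4.7]{Grzegrzolka2018a} supplies a coarse ultrafilter $\sheafg$ with $B \in \sheafg$. Surjectivity of $\corona f$ gives a coarse ultrafilter $\sheaff$ on $X$ with $f_*\sheaff \lambda \sheafg$. Since $\iip f{f(X)} = X \in \sheaff$ we have $f(X) \in f_*\sheaff$, and the definition of $\lambda$ now forces $f(X) \close B$. Hence $f(X) \notclose B$ would force $B$ to be bounded, which by Lemma~\ref{lem:coarseproximitymaps}(3) is coarse surjectivity.

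Coarse injectivity is the subtler direction. Given $A \notclose B$ in $X$, one extends $A, B$ to coarse ultrafilters $\sheaff_A \ni A, \sheaff_B \ni B$ with $\sheaff_A \not\lambda \sheaff_B$, and applies injectivity of $\corona f$ to obtain $f_*\sheaff_A \not\lambda f_*\sheaff_B$. The main obstacle is that $f_*\sheaff_A \not\lambda f_*\sheaff_B$ only asserts the existence of \emph{some} non-proximal pair $C \in f_*\sheaff_A, D \in f_*\sheaff_B$, not that the distinguished pair $f(A), f(B)$ witnesses the non-proximity; to conclude $f(A) \notclose f(B)$ one must either choose $\sheaff_A, \sheaff_B$ so that these specific subsets separate, or transport the proximity relation on $\corona X$ across the homeomorphism $\corona f$. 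A cleaner backup route is to invoke Corollary~\ref{cor:nureflectsepisandmonos} together with balancedness of $\mcoarse$: a homeomorphism is both mono and epi in $\topology$, so reflection gives $f$ both mono and epi in $\mcoarse$, and standard test-morphism constructions---out of $\Z_+$ built from sequences that separate in $X$ but stay bounded under $f$ for mono, and dually for epi---show these imply coarse injectivity and coarse surjectivity respectively.
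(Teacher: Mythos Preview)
Your backup route for coarse injectivity is exactly the paper's argument: reflect the monomorphism via Corollary~\ref{cor:nureflectsepisandmonos} and then invoke the fact that monomorphisms in $\mcoarse$ are coarsely injective (the paper cites \cite[Proposition~3.A.16]{Cornulier2016} for this test-map construction). For coarse surjectivity the paper defers to a generalization of \cite[Theorem~40]{Hartmann2019a}, whereas you give the ultrafilter argument explicitly; your argument is correct and self-contained, and is presumably what that citation unpacks to.

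Two minor points. First, the aborted direct attempt at injectivity adds nothing and should be cut---you correctly diagnose its gap, so there is no reason to record it. Second, the ``dually for epi'' remark is both unnecessary (you already established coarse surjectivity) and not as routine as you suggest: building test maps $g,h:Y\to Z$ that agree on a coarse neighbourhood of $f(X)$ but differ elsewhere requires constructing a suitable target metric space, which is more delicate than the $\Z_+$-probe used on the mono side. Since you do not need it, drop the claim.
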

\begin{proof}
Suppose $f:X\to Y$ is a coarse map between metric spaces such that $\corona f$ is an isomorphism in $\topology$. Then $\corona f$ is both a monomorphism and an epimorphism. The proof of \cite[Theorem~40]{Hartmann2019a} can be generalized to hold for metric spaces. Then the map $f$ is coarsely surjective. By Corollary~\ref{cor:nureflectsepisandmonos} the map $f$ is a monomorphism in $\mcoarse$. By a proof similar to the one of \cite[Proposition~3.A.16]{Cornulier2016} every monomorphism is coarsely injective. Since $f$ is coarsely injective and coarsely surjective it is a coarse equivalence.
\end{proof}

\begin{thm}
If $X$ is a proper metric space then there is a homeomorphism 
\[
\corona X\to \nu(X).
\]
Here the right side denote the Higson corona of \cite{Roe2003}. If $f:X\to Y$ is a coarse map between proper metric spaces then $\corona f,\nu(f)$ are homeomorphic (the same map pre-and postcomposed by a homeomorphism).
\end{thm}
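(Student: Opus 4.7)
My plan is to exhibit an explicit map $\phi:\corona X\to\nu(X)$ and verify it is a homeomorphism natural in coarse maps. For a coarse ultrafilter $\sheaff$, set
\[
\phi([\sheaff])\;=\;\bigcap_{A\in\sheaff}\overline{A}^{hX},
\]
where $hX$ is the Higson compactification and $\nu(X)=hX\setminus X$. Each $A\in\sheaff$ is unbounded (axiom (1) applied to $X,A\in\sheaff$ forces $X\close A$), so $\overline{A}^{hX}$ meets $\nu(X)$; axiom (1) on general pairs $A,B\in\sheaff$ together with the standard fact that in a proper metric space coarsely non-disjoint sets have intersecting corona traces (any Higson function would otherwise separate them) gives the family $\{\overline{A}^{hX}\}_{A\in\sheaff}$ the finite intersection property, so compactness produces a nonempty closed $\phi([\sheaff])\s\nu(X)$.

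The crucial step is that $\phi([\sheaff])$ is a singleton. Given distinct $x,y\in\nu(X)$, I pick a Higson function $f\in C_h(X)$ with continuous extensions $\bar f(x)=0$ and $\bar f(y)=1$, set $A=f^{-1}([0,1/3])$, and use continuity of $\bar f$ to see $y\notin\overline{A}^{hX}$ and $x\notin\overline{A^c}^{hX}$. Axioms (2)–(3) applied to $X=A\cup A^c$ place at least one of $A,A^c$ in $\sheaff$, so at least one of $x,y$ is excluded from $\phi([\sheaff])$. Well-definedness on $\lambda$-classes is the dual observation: if $\sheaff\lambda\sheafg$ then every pair $A\in\sheaff,B\in\sheafg$ satisfies $A\close B$, so the singletons $\phi([\sheaff])$ and $\phi([\sheafg])$ lie in the common trace $\overline{A}^{hX}\cap\overline{B}^{hX}$ and must coincide. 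Injectivity uses the converse: if $[\sheaff]\neq[\sheafg]$ there exist $A\in\sheaff,B\in\sheafg$ with $A\notclose B$, whose corona traces are disjoint. Surjectivity sends $p\in\nu(X)$ to $\sheaff_p:=\{A\s X:p\in\overline{A}^{hX}\}$, whose three axioms follow from $\overline{A\cup B}^{hX}=\overline{A}^{hX}\cup\overline{B}^{hX}$ and the fact that intersecting traces force $\close$. Continuity comes from the identity
\[
\phi^{-1}\bigl(\overline{A}^{hX}\cap\nu(X)\bigr)\;=\;\closedop A\qquad(A\s X),
\]
together with the observation that traces form a closed subbasis on $\nu(X)$ (each $\bar f^{-1}(C)\cap\nu(X)$ is an intersection of traces of sublevel sets) while $\{\closedop A\}_{A\s X}$ does so on $\corona X$; both spaces being compact Hausdorff, the continuous bijection $\phi$ is a homeomorphism.

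For naturality, let $f:X\to Y$ be coarse between proper metric spaces. Then $f$ extends continuously to $\bar f:hX\to hY$ via pullback of Higson functions, and $\nu(f)=\bar f|_{\nu(X)}$. For any $B\in f_*\sheaff$ the preimage $A:=\iip f B$ lies in $\sheaff$, and $\bar f(\overline{A}^{hX})\s\overline{f(A)}^{hY}\s\overline{B}^{hY}$ gives $\bar f(\phi_X([\sheaff]))\in\overline{B}^{hY}$ for every such $B$; since $\phi_Y([f_*\sheaff])$ is the unique such point, $\nu(f)\circ\phi_X=\phi_Y\circ\corona f$. The main obstacle I expect is the singleton argument in paragraph two: this is where properness of $X$ and the variation-at-infinity property of Higson functions enter essentially, through the sublevel-set construction and the verification that the closure of $A$ misses $y$ while the closure of $A^c$ misses $x$.
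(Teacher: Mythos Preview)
Your approach differs from the paper's. The paper first builds the space $h'(X)=X\sqcup\corona X$, shows it is a compactification, extends every Higson function to $h'(X)$ via ultrafilter limits, and then invokes the Ball criterion (the extended Higson functions separate points of $\corona X$) to identify $h'(X)$ with the Higson compactification; the explicit homeomorphism is then read off from the evaluation embedding into $\R^{C_h(X)}$. You instead work directly inside $hX$ and send a coarse ultrafilter to the intersection of the corona traces of its members. Your route is more direct and avoids the compactification--comparison machinery, at the price of using the equivalence ``$A\close B$ iff $\overline{A}^{hX}\cap\overline{B}^{hX}\cap\nu(X)\neq\emptyset$'' as a black box; the paper essentially proves both implications in the course of establishing point separation (the Dranishnikov-style function $x\mapsto d(x,U)/(d(x,U)+d(x,V))$ for $U\notclose V$).

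Two places need tightening. First, your well-definedness sentence does not prove what you claim: from $A\close B$ you only get that $\overline{A}^{hX}\cap\overline{B}^{hX}\cap\nu(X)$ is nonempty, not that either of the singletons $\phi(\sheaff),\phi(\sheafg)$ lies in it. The fix is to rerun your singleton argument contrapositively: if $p=\phi(\sheaff)\neq q=\phi(\sheafg)$, separate by a $[0,1]$-valued Higson function $g$ with $\bar g(p)=0,\bar g(q)=1$, and observe that $A:=g^{-1}([0,1/3])$ must lie in $\sheaff$ (otherwise $g^{-1}((1/3,1])\in\sheaff$ forces $\bar g(p)\ge 1/3$) and likewise $B:=g^{-1}([2/3,1])\in\sheafg$; since $A\notclose B$ by the Higson condition, this contradicts $\sheaff\lambda\sheafg$. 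Second, in the naturality step you assert that a coarse $f:X\to Y$ extends continuously to $\bar f:hX\to hY$ ``via pullback of Higson functions''. That requires $\varphi\circ f$ to be continuous for every Higson $\varphi$, which fails for a merely coarse $f$. The paper handles this by passing to an $R$-discrete coarsely equivalent model (so every map is continuous) before checking that $\varphi\circ f$ is Higson; you should insert the same reduction, or equivalently define $\nu(f)$ through the induced $*$-homomorphism $C_h(Y)/C_0(Y)\to C_h(X)/C_0(X)$, which is well defined for coarse maps.
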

\begin{proof}
Let $X$ be a proper metric space. First we show that $h'(X):=X\sqcup \corona X$ is a compactification of $X$: Closed sets on $h'(X)$ are generated by $(\bar A\cup \closedop A)_{A\s X}$. We show this topology is compact: If $(\bar A_i\cup \closedop{A_i})^c_i$ is an open cover of $h'(X)$ then there is a subcover 
\[
(\bar A_1 \cup \closedop{A_1})^c_1,\ldots,(\bar A_n\cup \closedop{A_n})^c
\]
such that $\closedop{A_1}^c,\ldots,\closedop{A_n}^c$ is a cover of $\corona X$. Now this implies $A_1^c,\ldots,A_n^c$ are a coarse cover of $X$. Thus $\bar A_1\cap\cdots\cap \bar A_n$ is both bounded and closed. Then there is a subcover 
\[
(\bar A_{n+1}\cup\closedop{A_{n+1}})^c,\ldots,(\bar A_{n+m}\cup\closedop{A_{n+m}})^c
\]
of $(\bar A_i\cup \closedop {A_i})^c_i$ such that $\bar A_{n+1}^c,\ldots,\bar A_{n+m}^c$ covers $\bar A_1\cap\cdots\cap \bar A_n$. Then
\[
(\bar A_1\cup\closedop {A_1})^c,\ldots,(\bar A_{n+m}\cup \closedop{A_{n+m}})^c
\]
are a subcover of $(\bar A_i\cup\closedop{A_i})^c_i$ that cover $h'(X)$.

Now $X,\corona X$ both appear as subspaces of $h'(X)$. We show the inclusion $X\to h'(X)$ is dense:
\f{
\bar X^{h'}
&=\bigcap_{\bar A\cup \closedop A\z X}(\bar A\cup \closedop A)\\
&=X\cup \closedop X\\
&=h'(X).
}

The Higson compactification $h(X)$ is determined by the $C^*$-algebra of Higson functions whose definition we now recall from \cite{Roe2003}: A bounded continuous function $\varphi:X\to \R$ is called \emph{Higson} if the function
\f{
d\varphi:X^2 &\to\R\\
 (x,y)&\mapsto \varphi(y)-\varphi(x)
}
when restricted to $E$ vanishes to infinity for every entourage $E\s X^2$.

Note \cite[Proposition~1]{Protasov2005} shows Higson functions on $X$ can be extended to $h'(X)$. For the convenience of the reader we recall it.

Without loss of generality assume that $X$ is $R$-discrete for some $R> 0$. Then every coarse ultrafilter $\sheaff$ on $X$ is determined by an ultrafilter $\sigma$ on $X$ by the proof of \cite[Theorem~17]{Hartmann2019a}. If $\sigma$ is an ultrafilter on $X$ then a bounded continuous function $\varphi:X\to \R$ determines an ultrafilter $\varphi_*\sigma:=\{A:\iip \varphi A\in\sigma\}$ on $\R$. Since the image of $\varphi$ is bounded and therefore relatively compact the ultrafilter $\varphi_*\sigma$ converges to a point $\sigma-\lim \varphi\in\R$. 

If two ultrafilters $\sigma,\tau$ induce asymptotically alike coarse ultrafilters and $\varphi$ is a Higson function then $\sigma-\lim \varphi =\tau-\lim \varphi$: Suppose $\sigma-\lim \varphi\not=\tau-\lim \varphi$. Then there exist neighborhoods $U\ni \sigma-\lim \varphi$ and $V\ni \tau-\lim \varphi$ such that $d(U,V)>0$. Let $E\s X^2$ be an entourage. Then
\f{
d\varphi: \iip \varphi U\times \iip \varphi V\cap E&\to \R\\
(x,y)&\to \varphi(y)-\varphi(x)
}
vanishes at infinity. Since $d(U,V)>0$ this implies that $\iip \varphi U\times \iip \varphi V\cap E$ is bounded. Now $E$ was an arbitrary entourage thus $\iip \varphi U,\iip \varphi V$ are coarsely disjoint. Since $\iip \varphi U\in \sigma,\iip \varphi V\in \tau$ the ultrafilters $\sigma,\tau$ induce coarse ultrafilters which are not asymptotically alike. 

If $\sheaff$ is a coarse ultrafilter on $X$ induced by an ultrafilter $\sigma$ and $\varphi$ a Higson function then denote by $\sheaff-\lim \varphi$ the point $\sigma-\lim \varphi$ in $\R$. By the above $\sheaff-\lim \varphi$ is well defined modulo asymptotically alike of $\sheaff$. 

If $\varphi:X\to \R$ is a Higson function then there is an extension
\begin{align*}
\hat \varphi:h'(X) &\to \R\\
x&\mapsto \begin{cases}
\varphi(x) & x\in X\\
\sheaff-\lim \varphi & x=\sheaff\in\corona X
\end{cases}
\end{align*}
we have shown $\hat \varphi$ is well defined. Now we show $\hat \varphi$ is continuous: Let $A\s \R$ be a closed set. If $\sheaff-\lim \varphi\in A$ fix an ultrafilter $\sigma$ on $X$ that induces $\sheaff$. Then $\iip \varphi A\in \sigma$. This implies $\sheaff\in\closedop{\iip \varphi A}$. On the other hand if $\sheaff\in\closedop{\iip \varphi A}$ then there is an ultrafilter $\sigma$ on $X$ with $\iip \varphi A\in\sigma$ that induces $\sheaff$. This implies $\sigma-\lim \varphi\in A$, thus $\sheaff-\lim \varphi\in A$. Now
\f{
\iip {\hat \varphi} A
&=\iip \varphi A \cup \{\sheaff:\sheaff-\lim \varphi\in A\}\\
&=\iip \varphi A\cup \closedop{\iip \varphi A}
}
is closed.

Denote by $(C_h(X))^{h'}$ the set of extensions of Higson functions on $X$ to $h'(X)$. By \cite{Ball1983} the $C^*$-algebra of Higson functions $C_h(X)$ determines the compactification $h'(X)$ if and only if $(C_h(X))^{h'}$ separates points of $\corona X$.

We show $(C_h(X))^{h'}$ separates points of $\corona X$: Let $\sheaff,\sheafg\in \corona X$ be two coarse ultrafilters with $\sheaff\bar\lambda \sheafg$. Then there exist elements $U\in\sheaff,V\in\sheafg$ with $U\notclose V$. Without loss of generality assume that $U,V$ are disjoint such that $d(x,U)+d(x,V)\not=0$ for every $x\in X$. Then define a function
\f{
\varphi:X&\to \R\\
x&\mapsto \frac{d(x,U)}{d(x,U)+d(x,V)}
}
By \cite[Lemma~2.2]{Dranishnikov1998} the function $d\varphi|_E$ vanishes to infinity for every entourage $E\s X^2$. Now $\varphi|_U\equiv 0$ and $\varphi|_V\equiv 1$. This implies $\sheaff-\lim \varphi=0$ and $\sheafg-\lim \varphi=1$.

If $f:X\to Y$ is a coarse map between $R$-discrete for some $R>0$ proper metric spaces and $\varphi:Y\to \R$ a Higson function then $\varphi\circ f:X\to \R$ is a Higson function: Since $X$ is $R$-discrete the map $f$ is continuous, therefore $\varphi\circ f$ is continuous. The map $\varphi\circ f$ is bounded since $\varphi$ is bounded. Let $E\s X^2$ be an entourage and $\varepsilon>0$ a number. Then $\zzp f E\s Y^2$ is an entourage. This implies $(d\varphi)|_{\zzp f E}$ vanishes at infinity. Thus there is a compact set $K\s Y$ such that
\[
|d(\varphi(x,y)|<\varepsilon
\] 
whenever $(x,y)\in \zzp f E\cap (K^2)^c$. Since $K$ is bounded the set $\iip f K\s X$ is bounded. The set $\iip f K$ is finite since $X$ is $R$-discrete and therefore $\iip f K$ is compact. Then
\[
|d(\varphi\circ f)(x,y)|<\varepsilon
\]
whenever $(x,y)\in E\cap (\iip f K)^2$.

Now we provide an explicit homeomorphism $\nu(X)\to \corona X$: Denote by 
\f{
e_{C_h(X)}:Z&\to \R^{C_h(X)}\\
x&\mapsto (\varphi(x))_\varphi
}
the evaluation map for $X$.

 Note $e_{C_h(X)}$ is a topological embedding and $\nu(X):=\overline{e_{C_h(X)}(X)}\ohne e_{C_h(X)}(X)$ by \cite{Ball1983}. A point $p\in\nu(X)$ is represented by a net $(x_i)_i$ such that for every Higson function $\varphi\in C_h(X)$ the net $\varphi(x_i)_i$ converges in $\R$. Define $F_i:=\{x_j:j\ge i\}$ for every $i$. Then $\sigma:=\{F_i:i\}$ is a filter on $X$ such that $\varphi_*\sigma$ converges to $\lim_i\varphi(x_i)$ for every Higson function $\varphi$ on $X$. An ultrafilter $\sigma'$ which is finer that $\sigma$ determines a coarse ultrafilter $\sheaff$. We have shown above that the association $\Phi_X:p\mapsto \sheaff$ is well defined modulo asymptotically alike. 

Now we show the map $\Phi_X$ is injective: Let $p,q\in \nu(X)$ be two points. If $\Phi_X(p)=\Phi_X(q)$ then $\Phi_X(p)-\lim \varphi=\Phi_X(q)-\lim \varphi$ for every Higson function $\varphi$. This implies $p=q$ in $\R^{C_h(X)}$. 

We show $\Phi_X$ is surjective: If $\sigma$ is an ultrafilter on $X$ that determines a coarse ultrafilter on $X$ then there is a net $(x_i)_i$ on $X$ which constitutes a section of $\sigma$. Since $\varphi(x_i)_i$ is a section of $\varphi_*\sigma$ for every Higson function $\varphi$ the net $\varphi(x_i)_i$ converges to $\sigma-\lim \varphi$ in $\R$. Thus $(x_i)_i$ converges to a point in $\nu(X)$.

Now we show $\Phi_X$ is continuous: If $A\s X$ is a subset then $\iip {\Phi_X} {\closedop A}$ is a subset of $\nu(X)$. We show it is closed. If $p\in \iip {\Phi_X} {\closedop A}$ then there is a net $(x_i)_i\s X$ that converges to $p$. The net $(x_i)_i$ is a section of an ultrafilter $\sigma$ with $A\in\sigma$. Thus there exists $i$ with $x_j\in A$ for every $j\ge i$. If on the other hand $(x_i)_i$ is a net in $X$ and there exists $i$ with $x_j\in A$ for every $j\ge i$ then $(x_i)_i$ is a section of an ultrafilter $\sigma$ on $X$ with $A\in \sigma$. This implies if $(x_i)_i$ converges to $p\in\nu(X)$ then $p\in\iip {\Phi_X}{\closedop A}$. Thus we have shown
\[
\iip {\Phi_X} {\closedop A}=\overline{e_{C_h(X)}(A)}\ohne e_{C_h(X)}(A)
\]
is closed. This way we have obtained that $\Phi_X$ is a homeomorphism.

Now we define a map 
\f{
f_*:\R^{C_h(X)}&\to \R^{C_h(Y)}\\
(x_\varphi)_{\varphi\in C_h(X)}&\mapsto (x_{\varphi\circ f})_{\varphi\in C_h(Y)}
}
We show $f_*(\overline{e_{C_h(X)}(X)})\s \overline{e_{C_h(Y)}(Y)}$: If $(x_\varphi)_\varphi\in\overline{e_{C_h(X)}(X)}$ then there is a net $(x_i)_i\s X$ such that $\lim_i\varphi(x_i)= x_\varphi$ for every $\varphi\in C_h(X)$. Then $f(x_i)_i\s Y$ is a net such that $\lim_i\varphi(f(x_i)) = x_{\varphi\circ f}$ for every $\varphi\in C_h(Y)$.

Now $\nu(f):=f_*|_{\overline{e_{C_h(X)}(X)}\ohne e_{C_h(X)}(X)}$. Then
\[
\nu(f)=\ii\Phi_Y\circ\corona f\circ \Phi_X.
\]
\end{proof}

\section{A K\"unneth Formula}

This is \cite[Definition~25]{Hartmann2019b}:
\begin{defn}\name{asymptotic product}
\label{defn:asymptoticproduct}
 If $X$ is a metric space and $Y$ a coarsely geodesic coarsely proper metric space fix points $p\in X$ and $q\in Y$ and a constant $R\ge 0$ large enough. Then the \emph{asymptotic product}
$X\ast Y$ of $X$ and $Y$ is defined by
\[
  X\ast Y:=\{(x,y)\in X\times Y: |d(p,x)-d(q,y)|\le R\}
\]
 as a subspace of $X\times Y$. We define the projection $p_1:X\ast Y\to X$ by $(x,y)\mapsto x$ and the projection $p_2:X\ast Y\to Y$ by $(x,y)\mapsto y$. Note that the projections are coarse maps. In what follows we denote by $d(p,\cdot),d(q,\cdot)$ coarse maps $X\to \R_+,Y\to \R_+$ defined by $x\in X\mapsto d(p,x),y\in Y\mapsto d(q,y)$.  
\end{defn}

\begin{rem}
Let $X,Y$ be metric spaces of which $Y$ is coarsely geodesic coarsely proper. Now $X\ast Y$ of Definition~\ref{defn:asymptoticproduct} is determined by points $p\in X,q\in Y$ and constant $R\ge 0$. By \cite[Lemma~26]{Hartmann2019b} the space $X\ast Y$ does not depend on the choice of $p,q,R$ up to coarse quivalence. By \cite[Lemma~27]{Hartmann2019b} the diagram
\[
 \xymatrix{
 X\ast Y \ar[d]_{p_1}\ar[r]^{p_2}
 & Y\ar[d]^{d(q,\cdot)}\\
 X\ar[r]_{d(p,\cdot)}
 &\R_+
 }
 \]
 is a pullback diagram in $\coarse$.
\end{rem}

\begin{lem}
\label{lem:asymptoticproductprops}
Let $X,Y$ be metric spaces with $Y$ coarsely geodesic coarsely proper. The following statements hold:
\begin{enumerate}
\item If $A\s X, B\s Y$ are subsets then $(A\times B)\cap (X\ast Y)$ is bounded if $A$ is bounded or $B$ is bounded.
\item If $(U_i)_i$ is a coarse cover of $X$ and $(V_j)_j$ a coarse cover of $Y$ then $((U_i\times V_j)\cap (X\ast Y))_{ij}$ is a coarse cover of $X\ast Y$
\item Let $\sheaff,\sheafg$ be coarse ultrafilters on $X,Y$ respectively with $d(p,\cdot)_*\sheaff\lambda d(q,\cdot)_*\sheafg$. Choose the constant of $X\ast Y$ large enough. Then
\[
\sheaff\ast\sheafg:=\{(A\times B)\cap (X\ast Y):A\in\sheaff,B\in\sheafg\}
\]
is a coarse ultrafilter on $X\ast Y$.
\end{enumerate}
\end{lem}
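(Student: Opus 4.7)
The plan is to verify each of the three parts in turn; parts~(1) and~(2) are largely bookkeeping while part~(3) contains the substantive work. For part~(1) I argue directly from the defining inequality $|d(p,x)-d(q,y)|\le R$: if $A\s B_M(p)$ then every $(x,y)\in (A\times B)\cap (X\ast Y)$ satisfies $d(q,y)\le M+R$, so the set sits inside the bounded rectangle $B_M(p)\times B_{M+R}(q)$; the case of bounded $B$ is symmetric.

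For part~(2) I write a given entourage $E\s (X\ast Y)^2$ as the restriction of a product $E_X\times E_Y$ of entourages of $X$ and $Y$, and establish the inclusion
\[
\bigcap_{i,j} E\big[((U_i\times V_j)\cap (X\ast Y))^c\big] \;\s\; \Big(\big(\textstyle\bigcap_i E_X[U_i^c]\big)\times Y \;\cup\; X\times \big(\textstyle\bigcap_j E_Y[V_j^c]\big)\Big)\cap (X\ast Y).
\]
The inclusion is checked by contrapositive: if $(x,y)\in X\ast Y$ avoids the right-hand side, I pick $i_0, j_0$ so that every $E_X$-neighbor of $x$ lies in $U_{i_0}$ and every $E_Y$-neighbor of $y$ lies in $V_{j_0}$; then no point of $((U_{i_0}\times V_{j_0})\cap (X\ast Y))^c$ can be $E$-close to $(x,y)$. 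The coarse cover hypothesis makes $\bigcap_i E_X[U_i^c]$ and $\bigcap_j E_Y[V_j^c]$ bounded, and part~(1) finishes the argument.

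For part~(3) I read $\sheaff\ast\sheafg$ as the upward-closed system generated by the product sets listed in the statement, and verify the three axioms of Definition~\ref{defn:cu}. Axiom~(3) is immediate. For Axiom~(1) I fix entourages $E_X, E_Y$ witnessing $A_1\close A_2$ and $B_1\close B_2$, and use the matching condition $d(p,\cdot)_*\sheaff\lambda d(q,\cdot)_*\sheafg$ to extract a single constant $M$ controlling the coarse closeness in $\R_+$ of $d(p,A_i)$ and $d(q,B_j)$ for all four pairs $(i,j)$. The coarsely geodesic, coarsely proper structure of $Y$ turns these $\R_+$-matchings into genuine point matchings inside $B_1$ and $B_2$ with prescribed distances from $q$, producing an unbounded set of $(x,y)\in X\ast Y$ that are $(E_X\times E_Y)$-close to both $(A_1\times B_1)\cap (X\ast Y)$ and $(A_2\times B_2)\cap (X\ast Y)$, provided $R$ is taken larger than $2M$ plus the diameters of $E_X, E_Y$. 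For Axiom~(2), given $C\cup D\supseteq (A\times B)\cap (X\ast Y)$ I first show $p_1((A\times B)\cap (X\ast Y))\in\sheaff$: its complement in $A$ consists of points whose $d(p,\cdot)$-value is $R$-separated from $d(q,B)$, so placing it in $\sheaff$ would contradict the matching condition once $R$ exceeds twice the witnessing constant. Then ultra for $\sheaff$ applied to $p_1(C\cap(A\times B)\cap X\ast Y)\cup p_1(D\cap(A\times B)\cap X\ast Y)$ gives WLOG $A_C\in\sheaff$; a symmetric argument for $p_2$ produces $B_C\in\sheafg$ with $C\supseteq (A_C\times B_C)\cap (X\ast Y)$.

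The hard part will be Axiom~(1) of part~(3): the matching condition only gives pairwise $\R_+$-closeness with an $(A,B)$-dependent witnessing entourage, so I must simultaneously extract a uniform constant handling the four pairs in $\{A_1,A_2\}\times \{B_1,B_2\}$ and promote $\R_+$-distance matching to genuine pointwise matching in $Y$ via the coarsely geodesic, coarsely proper hypothesis. The remark following Definition~\ref{defn:asymptoticproduct} is what lets me absorb the resulting slack into $R$ without changing the coarse type of $X\ast Y$.
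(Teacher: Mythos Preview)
Your arguments for parts~(1) and~(2) are correct and essentially match the paper's; your version of~(2) spells out the contrapositive more explicitly but establishes the same inclusion.

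For part~(3) you take a genuinely different route. The paper first verifies that $\{A\times B : A\in\sheaff,\, B\in\sheafg\}$ is a coarse ultrafilter on the full product $X\times Y$ --- where Axiom~1 is immediate, since $A_1\close A_2$ in $X$ and $B_1\close B_2$ in $Y$ give $A_1\times B_1\close A_2\times B_2$ in $X\times Y$ with no interference from $R$ --- then shows that each $A\times B$ is close to the band $X\ast Y$, and finally restricts. You instead work directly inside $X\ast Y$.

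Your direct approach has a genuine gap at Axiom~(2). From $A_C := p_1\big(C\cap(A\times B)\cap (X\ast Y)\big)$ and the analogous $B_C$ you claim $C\supseteq (A_C\times B_C)\cap(X\ast Y)$, but this is false: projecting to each factor and then re-forming the rectangle produces a set that is in general strictly larger than $C$. For instance, with $X=Y=\Z_+$, $R=10$, $A=B=\Z_+$, and $C=\{(n,n+5):n\ge 0\}\cup\{(n+5,n):n\ge 0\}$, one gets $A_C=B_C=\Z_+$, so $(A_C\times B_C)\cap(X\ast Y)$ is the entire band, not contained in $C$. There is also a secondary issue in your Axiom~(1) sketch: the lower bound you impose on $R$ depends on $M$, $E_X$, $E_Y$, all of which depend on the particular sets $A_1,A_2,B_1,B_2$; you therefore have not exhibited a single $R$ (depending only on $\sheaff,\sheafg$) for which all rectangle-traces are pairwise close inside $X\ast Y$. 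The paper's detour through $X\times Y$ is precisely what sidesteps both of these difficulties.
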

\begin{proof}
\begin{enumerate}
\item Suppose $A$ is bounded. Then $(x,y)\in A\ast Y$ implies $x\in A$ and $|d(x,p)-d(y,q)|\le R$. Let $S\ge 0$ be such that $A\s B(p,S)$. Then $y\in B(q,R+S)$. Thus $A\ast Y$ is bounded. Similarly if $B$ is bounded then $X\ast B$ is bounded.
\item Let $E\s (X\ast Y)^2$ be an entourage. Then
\f{
\bigcap_{ij} E[(U_i\times V_j)^c\cap (X\ast Y)]
&\s\bigcap_{ij} E[(U_i\times V_j)^c]\cap (X\ast Y)\\
&=\bigcap_{ij} (E[U_i^c\times Y]\cup E[X\times V_j^c])\cap (X\ast Y)\\
&= (\bigcap_i E[U_i^c\times Y]\cap (X\ast Y))\cup (\bigcap_j E[X\times V_j^c]\cap (X\ast Y))
}
is bounded. Thus $((U_i\times V_j)\cap (X\ast Y))_{ij}$ is a coarse cover of $X\ast Y$.

Alternative proof: $(\iip{p_1}{U_i}\cap \iip{p_2}{V_j})_{ij}$.
\item Let $i:X\ast Y\to X\times Y$ be the inclusion. At first we prove
\[
i_*(\sheaff\ast\sheafg)=\{A\times B:A\in\sheaff,B\in\sheafg\}
\]
is a coarse ultrafilter on $X\times Y$. We check the axioms of a coarse ultrafilter on $i_*(\sheaff\ast\sheafg)$:
\begin{enumerate}
\item If $A_1\times B_1,A_2\times B_2\in i_*(\sheaff\ast\sheafg)$ then $A_1,A_2\in\sheaff,B_1,B_2\in\sheafg$. This implies $A_1\close A_2$ in $X$ and $B_1\close B_2$ in $Y$. Then $A_1\times B_1\close A_2\times B_2$ in $X\times Y$. 
\item Let $A_1\times B_1,A_2\times B_2\s X\times Y$ be two subsets with $(A_1\times B_1)\cup (A_2\times B_2)\in i_*(\sheaff\ast \sheafg)$. Since $(A_1\cup A_2)\times (B_1\cup B_2)\z (A_1\times B_1)\cup (A_2\times B_2)$ this implies $(A_1\cup A_2)\times (B_1\cup B_2)\in i_*(\sheaff\ast\sheafg)$. Thus $(A_1\cup A_2)\in\sheaff,(B_1\cup B_2)\in\sheafg$. This implies $A_1\in\sheaff$ or $A_2\in\sheaff$. Then $A_1\times (B_1\cup B_2)\in i_*(\sheaff\ast\sheafg)$ or $A_2\times (B_1\cup B_2)\in i_*(\sheaff\ast \sheafg)$. Suppose $A_1\times (B_1\cup B_2)\in i_*(\sheaff\ast \sheafg)$. Since $A_1\times B_1$ is maximal among factors of two subsets of $X,Y$ contained in $A_1\times (B_1\cup B_2),(A_1\times B_1)\cup (A_2\times B_2)\in i_*(\sheaff\ast\sheafg)$ we obtain $A_1\times B_1\in i_*(\sheaff\ast \sheafg)$.
\item $X\times Y\in i_*(\sheaff\ast \sheafg)$ since $X\in\sheaff,Y\in\sheafg$.
\end{enumerate}
Let $A\times B\in i_*(\sheaff\ast\sheafg)$ be an element. Since $d(p,\cdot)_*\sheaff\lambda d(q,\cdot)_*\sheafg$ the sets $d(p,\cdot)(A),d(q,\cdot)(B)$ are close in $\R_+$. Thus there exists an $R\ge 0$ and unbounded subsets $A'\s A,B'\s B$ with
\[
|d(p,a)-d(q,b)|\le R
\]
for $a\in A',b\in B'$. Thus we have shown $A\times B\close X\ast Y$. Choose the constant of $X\ast Y$ large enough then $X\ast Y\in i_*(\sheaff\ast\sheafg)$. We can thus restrict $i_*(\sheaff\ast \sheafg)$ to $X\ast Y$ and obtain $\sheaff\ast \sheafg=(i_*(\sheaff\ast\sheafg))|_{X\ast Y}$. This way we have shown $\sheaff\ast \sheafg$ is a coarse ultrafilter.
\end{enumerate}
\end{proof}

\begin{thm}
Let $X,Y$ be metric spaces with $Y$ coarsely geodesic coarsely proper. Define 
 \[
\corona X\ast \corona Y:=\{(\sheaff,\sheafg)\in\corona X\times \corona Y:\corona {d(p,\cdot)}(\sheaff)=\corona {d(q,\cdot)} (\sheafg)\}
\]
Then the map
\[
\langle\corona{p_1},\corona{p_2}\rangle:\corona{X\ast Y}\to\corona X\ast \corona Y
\]
is a homeomorphism.
\end{thm}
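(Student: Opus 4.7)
The plan is to verify that the map $\Phi := \langle\corona{p_1},\corona{p_2}\rangle$ is a continuous bijection from a compact space to a Hausdorff space, so that it is automatically a homeomorphism. First I would check well-definedness: by the defining inequality $|d(p,x)-d(q,y)|\le R$ on $X\ast Y$, the coarse maps $d(p,\cdot)\circ p_1$ and $d(q,\cdot)\circ p_2$ into $\R_+$ are close, hence applying $\corona$ identifies them. Consequently for every coarse ultrafilter on $X\ast Y$, the image under $\Phi$ satisfies the pullback condition. Continuity of $\Phi$ is then immediate from continuity of each $\corona{p_i}$ together with the fact that $\corona X\ast\corona Y$ carries the subspace topology inherited from $\corona X\times\corona Y$.

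For surjectivity, given a pair $([\sheaff],[\sheafg])\in \corona X\ast\corona Y$, the pullback condition unpacks to $d(p,\cdot)_*\sheaff\,\lambda\,d(q,\cdot)_*\sheafg$, which is exactly the hypothesis needed to invoke Lemma~\ref{lem:asymptoticproductprops}(3). The resulting coarse ultrafilter $\sheaff\ast\sheafg$ on $X\ast Y$ lifts the pair: if $A\in\sheaff$ then $p_1^{-1}(A)=(A\times Y)\cap(X\ast Y)\in\sheaff\ast\sheafg$ upon choosing the factor $Y\in\sheafg$, so $A\in p_{1*}(\sheaff\ast\sheafg)$; an analogous argument handles $p_2$. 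Hence $\Phi([\sheaff\ast\sheafg])=([\sheaff],[\sheafg])$.

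For injectivity, suppose $\sheaff_1,\sheaff_2$ are coarse ultrafilters on $X\ast Y$ with the same image under $\Phi$, so $p_{1*}\sheaff_1\,\lambda\,p_{1*}\sheaff_2$ and $p_{2*}\sheaff_1\,\lambda\,p_{2*}\sheaff_2$. Picking $C_j\in\sheaff_j$ and setting $A_j:=p_1(C_j)$, $B_j:=p_2(C_j)$, the pushforward hypotheses give $A_1\close A_2$ in $X$ and $B_1\close B_2$ in $Y$. Since $C_j\subseteq(A_j\times B_j)\cap(X\ast Y)$, it suffices to show $(A_1\times B_1)\cap(X\ast Y)\close(A_2\times B_2)\cap(X\ast Y)$ in $X\ast Y$. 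The idea is to build an unbounded witness by combining unbounded sequences in $E_1[A_1]\cap E_1[A_2]\subseteq X$ and $E_2[B_1]\cap E_2[B_2]\subseteq Y$, and then use the coarsely geodesic coarsely proper structure of $Y$ to perturb the chosen partners so that the resulting pairs fall inside $X\ast Y$, enlarging $R$ if necessary to absorb the thickening.

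The step I expect to be the main obstacle is this last lifting argument: the coordinates $a\in A_j$ and $b\in B_j$ of a point $(a,b)\in C_j$ are linked by the constraint $|d(p,a)-d(q,b)|\le R$, and a naive pairing of the projected witnesses can easily violate this link. The coarsely geodesic hypothesis on $Y$ provides the flexibility to adjust the second coordinate and restore the constraint, but making this quantitative is the technical heart of the proof. Once injectivity is in hand, $\Phi$ is a continuous bijection from the compact space $\corona{X\ast Y}$ to the Hausdorff subspace $\corona X\ast\corona Y$ of $\corona X\times\corona Y$, hence a homeomorphism.
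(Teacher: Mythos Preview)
Your well-definedness, continuity, and surjectivity arguments match the paper's proof almost verbatim. The gap is in your injectivity step.

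The reduction you propose does not work: from $C_j\subseteq (A_j\times B_j)\cap(X\ast Y)$ you claim it suffices to show the \emph{supersets} $(A_1\times B_1)\cap(X\ast Y)$ and $(A_2\times B_2)\cap(X\ast Y)$ are close. But closeness of supersets never implies closeness of subsets (trivially, $X\ast Y$ is close to itself while containing any pair of coarsely disjoint sets). So even if your lifting argument using the coarsely geodesic structure of $Y$ succeeds, it only produces an unbounded witness for the closeness of the enlarged rectangles, not for $C_1$ and $C_2$ themselves. The ``technical heart'' you flag is therefore not the real obstacle; the reduction preceding it is already broken.

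The paper sidesteps this entirely. Rather than proving injectivity directly, it shows that the surjectivity construction $(\sheaff,\sheafg)\mapsto\sheaff\ast\sheafg$ is in fact a two-sided inverse. The missing half is the identity $(p_{1*}\sheaff)\ast(p_{2*}\sheaff)\lambda\sheaff$ for any coarse ultrafilter $\sheaff$ on $X\ast Y$: given $A\in\sheaff$, one has $p_1(A)\in p_{1*}\sheaff$ and $p_2(A)\in p_{2*}\sheaff$, so $(p_1(A)\times p_2(A))\cap(X\ast Y)\in (p_{1*}\sheaff)\ast(p_{2*}\sheaff)$; since this set contains $A$, one obtains $(p_{1*}\sheaff)\ast(p_{2*}\sheaff)\subseteq\sheaff$. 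A coarse ultrafilter contained in another is automatically asymptotically alike to it, which finishes bijectivity with no quantitative estimate and no appeal to the geodesic hypothesis at this stage. Your compact--Hausdorff conclusion then applies as written.
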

\begin{proof}
We prove $\langle\corona{p_1},\corona{p_2}\rangle$ is well defined: Let $\sheaff$ be a coarse ultrafilter on $X\ast Y$ then $p_{1*}\sheaff,p_{2*}\sheaff$ are coarse ultrafilters on $X,Y$, respectively. Since $d(p,\cdot)\circ p_1,d(q,\cdot)\circ p_2$ are close the coarse ultrafilters $d(p,\cdot)_*p_{1*}\sheaff,d(q,\cdot)_*p_{2*}\sheaff$ are asymptotically alike. Thus we have shown $(p_{1*}\sheaff,p_{2*}\sheaff)\in \corona X\ast\corona Y$.

Now we prove $\langle \corona{p_1},\corona{p_2}\rangle$ is surjective: Let $(\sheaff,\sheafg)\in\corona X\ast\corona Y$ be a point. By Lemma~\ref{lem:asymptoticproductprops} the system of subsets $\sheaff\ast\sheafg$ is a coarse ultrafilter on $X\ast Y$. Denote by $p_1':X\times Y\to X,p_2':X\times Y\to Y$ the projection to the first, second factor, respectively and by $i:X\ast Y\to X\times Y$ the inclusion. Then $p_1=p_1'\circ i,p_2=p_2'\circ i$. Since $i_*(\sheaff\ast\sheafg)=\{A\times B:A\in\sheaff,B\in\sheafg\}$ we obtain the relations $p_{1*}'i_*(\sheaff\ast\sheafg)\lambda \sheaff,p_{2*}'i_*(\sheaff\ast\sheafg)\lambda \sheafg$. Thus we have proved $\langle \corona{p_1},\corona{p_2}\rangle (\sheaff\ast\sheafg)=(\sheaff,\sheafg)$.

Now we prove $(\corona{p_1}(\sheaff))\ast(\corona{p_2}(\sheaff))=\sheaff$ for every point $\sheaff\in\corona{X\ast Y}$: Let $A\in\sheaff$ be an element. Then $(p_1(A)\times p_2(A))\cap (X\ast Y)\in (p_{1*}\sheaff)\ast(p_{2*}\sheaff)$. Since $A\s (p_1(A)\times p_2(A))\cap (X\ast Y)$ we obtain $(p_{1*}\sheaff)\ast(p_{2*}\sheaff)\s \sheaff$. Thus $(p_{1*}\sheaff)\ast(p_{2*}\sheaff)\lambda \sheaff$. This way we have shown $\langle \corona{p_1},\corona{p_2}\rangle$ is bijective.

Since $\corona{X\ast Y}$ is compact and $\corona X\ast\corona Y$ is Hausdorff we obtain that $\langle \corona{p_1},\corona{p_2}\rangle$ is a homeomorphism.
\end{proof}

\begin{lem}
Let $X,Y$ be metric spaces. There is a homeomorphism 
\f{
\corona X \times \corona Y &\to \corona{X\times Y}\\
(\sheaff,\sheafg)&\mapsto \sheaff\times \sheafg
}
where $\sheaff\times \sheafg:=\{A\times B:A\in \sheaff,B\in\sheafg\}$. 
\end{lem}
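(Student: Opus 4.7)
The plan is to mirror the proof of the preceding theorem on $\corona{X\ast Y}$. I would consider the natural continuous map
\[
\Phi=\langle\corona{p_1},\corona{p_2}\rangle:\corona{X\times Y}\to\corona X\times\corona Y
\]
induced by the coordinate projections, and exhibit the map $\Psi:(\sheaff,\sheafg)\mapsto\sheaff\times\sheafg$ of the statement as its two-sided inverse up to the relation $\lambda$.

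First I would verify that, for every pair $\sheaff\in\corona X$, $\sheafg\in\corona Y$, the system $\sheaff\times\sheafg$ is a coarse ultrafilter on $X\times Y$ in the sense of Definition~\ref{defn:cu}, adapting the verification in Lemma~\ref{lem:asymptoticproductprops}(3). Axiom~3 is immediate from $X\in\sheaff$ and $Y\in\sheafg$. For axiom~1, given $A_i\times B_i\in\sheaff\times\sheafg$ for $i=1,2$, I would pick entourages $E\s X^2$ and $F\s Y^2$ witnessing $A_1\close A_2$ and $B_1\close B_2$; then $E\times F$ is an entourage on $X\times Y$, and $(E\times F)[A_i\times B_i]=E[A_i]\times F[B_i]$, so the intersection is a product of two unbounded sets and therefore unbounded. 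Axiom~2 I would handle by the same maximality trick as in Lemma~\ref{lem:asymptoticproductprops}(3), reducing an arbitrary decomposition by first intersecting with strips $A\times Y$ and $X\times B$ and invoking axiom~2 for $\sheaff$ and $\sheafg$ individually.

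Next I would check bijectivity. For $\Phi\circ\Psi=\mathrm{id}$, the first coordinate of $\Phi(\sheaff\times\sheafg)$ is $(p_1)_*(\sheaff\times\sheafg)$; since $A\times Y\in\sheaff\times\sheafg$ for every $A\in\sheaff$ and $p_1^{-1}(A)\supseteq A\times Y$, the pushforward is $\lambda$-equivalent to $\sheaff$, and symmetrically for $p_2$. For $\Psi\circ\Phi$, given $\sheafh\in\corona{X\times Y}$, any $C\in\sheafh$ and any $A\times B\in ((p_1)_*\sheafh)\times ((p_2)_*\sheafh)$, I would use axiom~1 in $\sheafh$ together with $p_1^{-1}(A),p_2^{-1}(B)\in\sheafh$ and apply axiom~2 to the decomposition $p_1^{-1}(A)=(A\times B)\cup(A\times B^c)$ to conclude that $A\times B\in\sheafh$, hence $C\close A\times B$ by axiom~1 again. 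This yields $\sheafh\lambda\Psi(\Phi(\sheafh))$.

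Finally, since $\corona{X\times Y}$ is compact and $\corona X\times\corona Y$ is Hausdorff, the continuous bijection $\Phi$ is automatically a homeomorphism, and $\Psi$ is its continuous inverse. The main obstacle I anticipate is precisely verifying axiom~2 of a coarse ultrafilter for $\sheaff\times\sheafg$ when the decomposing subsets are not themselves of product form, and, dually, justifying that the term $A\times B^c$ in the decomposition $p_1^{-1}(A)=(A\times B)\cup(A\times B^c)$ is not in $\sheafh$; this second point should follow from coarse disjointness of $X\times B^c$ from $p_2^{-1}(B)\in\sheafh$, which rules it out by axiom~1.
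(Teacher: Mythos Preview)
Your overall plan coincides with the paper's and, for the continuity step, is arguably cleaner: you use the automatically continuous map $\Phi=\langle\corona{p_1},\corona{p_2}\rangle$ together with the compact--Hausdorff argument, exactly as in the preceding theorem, whereas the paper checks continuity of $\Psi$ directly via the proximity relation (and only in one variable at a time).

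The genuine gap is precisely where you anticipated, in the $\Psi\circ\Phi$ step. Your proposed fix---ruling out $A\times B^c$ from the coarse ultrafilter on the grounds that $X\times B^c$ is coarsely disjoint from $p_2^{-1}(B)=X\times B$---does not work: for an arbitrary $B\s Y$ the complement $B^c$ is in general \emph{not} coarsely disjoint from $B$ (take $Y=\Z$ and $B$ the even integers), so $X\times B$ and $X\times B^c$ can be close in $X\times Y$, and axiom~1 produces no contradiction. The decomposition $p_1^{-1}(A)=(A\times B)\cup(A\times B^c)$ therefore does not force $A\times B$ into the ultrafilter.

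The paper handles this step without any such splitting. Writing $\sheaff$ for the coarse ultrafilter on $X\times Y$ and $\sheaff_i:=\{p_i(C):C\in\sheaff\}$, it simply observes that every $C\in\sheaff$ satisfies $C\s p_1(C)\times p_2(C)$, and from this derives the inclusion $\sheaff_1\times\sheaff_2\s\sheaff$; an inclusion of coarse ultrafilters then forces $\lambda$-equivalence by axiom~1. Replace your decomposition argument by this containment observation.
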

\begin{proof}
We already showed in the proof of Lemma~\ref{lem:asymptoticproductprops} that $\sheaff\times\sheafg$ is a coarse ultrafilter on $X\times Y$. It remains to show that the map is bijective and continuous.

Let $\sheaff_1,\sheaff_2\in\corona X,\sheafg_1,\sheafg_2 \in \corona Y$ be coarse ultrafilters. Suppose $(\sheaff_1\times \sheafg_1)\lambda (\sheaff_2\times \sheafg_2)$. Let $A\in \sheaff_1,B\in \sheaff_2$ be elements. Then $A\times Y \in \sheaff_1\times\sheafg_1,B\times Y\in\sheaff_2\times \sheafg_2$. Thus $A\times Y\close B\times Y$. This implies $A\close B$ in $X$, thus $\sheaff_1\lambda \sheaff_2$.

Let $\sheaff\in \corona {X\times Y}$ be a coarse ultrafilter. Define
\[
\sheaff_i:=\{p_i(A):A\in\sheaff\}
\]
for $i=1,2$. Here $p_i$ denotes the projection to the $i$th factor. Then $\sheaff_1$ is a coarse ultrafilter on $X$:
\begin{enumerate}
\item If $A,B\in\sheaff_1$ then $A\times Y,B\times Y\in \sheaff$. This implies $A\close B$.
\item If $A,B\s X$ with $A\cup B\in\sheaff_1$ then $(A\cup B)\times Y\in \sheaff$. Thus $A\times Y\in\sheaff$ or $B\times Y\in \sheaff$. Then $A\in \sheaff_1$ or $B\in \sheaff_1$
\item Since $X\times Y\in\sheaff$ the set $X\in\sheaff_1$ is contained.
\end{enumerate}
Since $A\s p_1(A)\times p_2(A)$ we have an inclusion $\sheaff_1\times\sheaff_2\s \sheaff$. Thus $(\sheaff_1\times\sheaff_2)\lambda \sheaff$.

Fix a coarse ultrafilter $\sheafg\in\corona Y$. We show the map
\f{
\corona X &\to \corona {X\times Y}\\
\sheaff&\mapsto \sheaff\times \sheafg 
}
is continuous: Let $\pi_1,\pi_2\s \corona X$ be subsets with $(\pi_1\times \sheafg)\notclose (\pi_2\times \sheafg)$. Then there exist subsets $A,B\s X\times Y$ with $\pi_1\times\sheafg\s \closedop A,\pi_2\times \sheafg\s\closedop B$ and $A\notclose B$. Since the left side is a product we can assume $A=A_1\times A_2,B=B_1\times B_2$ also. Then $\pi_1\s \closedop {A_1},\pi_2\s \closedop {B_1}$ with $A_1\notclose B_1$.
\end{proof}

If $X$ is a metric space we associate to $X$ a Grothendieck topology determined by coarse covers. Sheaf cohomology on coarse covers is coined coarse cohomology with twisted coefficients in \cite{Hartmann2017a}. Now coarse covers on $X$ determine the finite open covers on $\corona X$. Thus sheaf cohomology on $\corona X$ equals twisted cohomology on $X$ as a coarse space. We compose a Künneth formula for coarse cohomology with twisted coefficients.

\begin{thm}\name{Künneth formula}
Let $X,Y$ be metric spaces, $\sheaff$ a sheaf on $X$ and $\sheafg$ a sheaf on $Y$. Define a presheaf $\sheaff'$ on $X\times Y$ by
\[
U\mapsto \sheaff(p_1(U)).
\]
Then $\sheaff'$ is a sheaf on $X\times Y$. Similarly we can define a sheaf $\sheafg'$ on $X\times Y$. There is a homomorphism
\[
\bigoplus_{p+q=n}\cohomology p X \sheaff \otimes \cohomology q Y \sheafg \to \cohomology n {X\times Y} {\sheaff'\otimes \sheafg'}  
\]
Here $\sheaff'\otimes\sheafg'$ denotes the sheaf associated to the presheaf $U\mapsto \sheaff'(U)\otimes \sheafg'(U)$ for $U\s X\times Y$. If there is a $\sheaff$-acyclic coarse cover $\ucover$ of $X$ and a $\sheafg$-acyclic coarse cover $\vcover$ of $Y$ such that $\check C^q(\vcover,\sheafg)$ is torsion free for every $q$ and $\cohomology p \ucover\sheaff$ is torsion free for every $p$ then the homomorphism is an isomorphism.
\end{thm}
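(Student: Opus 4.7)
My approach is to reduce the statement to the algebraic Künneth formula for cochain complexes via Čech cohomology on appropriately chosen coarse covers. The Künneth map is constructed as an external product on Čech cocycles, and under the acyclicity hypotheses the Čech complex of the product cover matches the tensor product of the individual Čech complexes up to quasi-isomorphism.

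First I would verify that $\sheaff'$ defines a sheaf on $X \times Y$ with respect to the Grothendieck topology of coarse covers. If $(W_k)_k$ coarsely covers some $U \subseteq X \times Y$, then $(p_1(W_k))_k$ coarsely covers $p_1(U) \subseteq X$ because $p_1$ is a coarse map preserving coarse covers, and the sheaf axioms for $\sheaff'(U) = \sheaff(p_1(U))$ on the cover $(W_k)_k$ reduce to the sheaf axioms for $\sheaff$ on the cover $(p_1(W_k))_k$. The same applies to $\sheafg'$. The Künneth map itself is the external cup product: given Čech cocycles $\alpha \in \check Z^p(\ucover, \sheaff)$ and $\beta \in \check Z^q(\vcover, \sheafg)$ for coarse covers $\ucover$ of $X$ and $\vcover$ of $Y$, one forms their product cocycle on the product cover $\ucover \times \vcover := (U_i \times V_j)_{i,j}$ with values in the presheaf $U \mapsto \sheaff'(U) \otimes \sheafg'(U)$, postcomposes with the sheafification map to land in $\sheaff' \otimes \sheafg'$, and passes to the colimit over refinements to obtain the homomorphism at the level of sheaf cohomology.

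For the isomorphism statement, I would apply the Leray-type identification $\check H^\bullet(\ucover, \sheaff) \cong \cohomology{\bullet}{X}{\sheaff}$ available under $\sheaff$-acyclicity of $\ucover$ (and similarly for $\vcover$). By the argument of Lemma~\ref{lem:asymptoticproductprops}(2), applied in the easier setting of the ordinary product, the family $\ucover \times \vcover$ is a coarse cover of $X \times Y$. On the $n$-fold intersection one has $\bigcap_k (U_{i_k} \times V_{j_k}) = \bigl(\bigcap_k U_{i_k}\bigr) \times \bigl(\bigcap_k V_{j_k}\bigr)$, and on such a rectangular set the presheaf $U \mapsto \sheaff'(U) \otimes \sheafg'(U)$ evaluates to $\sheaff(\bigcap_k U_{i_k}) \otimes \sheafg(\bigcap_k V_{j_k})$. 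An Eilenberg--Zilber style shuffle map then furnishes a quasi-isomorphism $\check C^\bullet(\ucover \times \vcover, \sheaff' \otimes \sheafg') \simeq \mathrm{Tot}^\bullet \bigl(\check C^\bullet(\ucover, \sheaff) \otimes \check C^\bullet(\vcover, \sheafg)\bigr)$, and the algebraic Künneth theorem for a tensor product of cochain complexes, with the torsion-freeness hypotheses killing the $\mathrm{Tor}^1$ terms, produces the desired isomorphism.

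The main obstacle I expect is controlling the passage from the presheaf tensor product $U \mapsto \sheaff'(U) \otimes \sheafg'(U)$ to its sheafification $\sheaff' \otimes \sheafg'$: in general these two disagree, and one needs to know that on the intersections appearing in the Čech complex of $\ucover \times \vcover$ the sheafification map is innocuous. This is precisely where torsion-freeness of $\check C^q(\vcover, \sheafg)$ is consumed, via a universal coefficient argument at the presheaf level. A secondary delicate point is establishing that $\ucover \times \vcover$ is $(\sheaff' \otimes \sheafg')$-acyclic, which I expect to follow from the acyclicity of each factor through the spectral sequence of the Čech double complex.
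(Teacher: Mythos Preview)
Your approach is essentially identical to the paper's: construct the cross product on \v Cech cochains for the product cover $\ucover\times\vcover$, invoke an Eilenberg--Zilber identification with the total complex of the tensor product, and then apply the algebraic K\"unneth theorem (the paper cites \cite[Section~2.8, Chapter~1]{Shafarevich1996}) under the stated torsion-freeness hypotheses, passing to the colimit for the general homomorphism. The two technical worries you flag---the sheafification comparison and the $(\sheaff'\otimes\sheafg')$-acyclicity of $\ucover\times\vcover$---are not addressed in the paper's proof either; the paper simply asserts the chain-level isomorphism and appeals to the reference, so your proposal is at least as complete as the original.
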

\begin{proof}
There is a \v Cech cohomology version of the Eilenberg-Zilber theorem. If $\ucover,\vcover$ are coarse covers of $X,Y$, respectively then 
\[
\ucover\times\vcover:=\{U_i\times V_i:U_i\in\ucover,V_j\in \vcover\}
\]
is a coarse cover of $X\times Y$. Then there is a homomorphism
\[
\bigoplus_{p+q=n}\check C^p(\ucover,\sheaff)\otimes \check C^q(\vcover,\sheafg)
\to \check C^n(\ucover\times \vcover,\sheaff'\otimes\sheafg')
\]
for every $n\ge 0$ which maps $(s_{i_0\cdots i_p})\in\prod\sheaff(U_{i_0}\cap\cdots\cap U_{i_p}),(t_{j_0\cdots j_q})\in\prod\sheafg(V_{j_0}\cap\cdots\cap V_{j_q})$ to $(s_{i_0\cdots i_p}\otimes t_{j_0\cdots j_q})\in \prod(\sheaff'\otimes\sheafg')((U_{i_0}\cap\cdots\cap U_{i_p})\times (V_{j_0}\cap\cdots\cap  V_{j_q})$. This induces an isomorphism of cochain complexes. We can now apply \cite[Section~2.8, Chapter~1]{Shafarevich1996} which gives the desired result in case of acyclic coarse covers. In the other case taking the direct limit over coarse covers gives the desired homomorphism.
\end{proof}

\section{Space of Rays}

\begin{defn}\name{space of rays}
 Let $Y$ be a compact topological space. As a set the \emph{space of rays} $\digamma(Y)$ of $Y$ is $Y\times \Z_+$. A subset $E\s Y^2$ is an entourage if for every countable subset $((x_k,i_k),(y_k,j_k))_k\s E$ the following properties hold:
 \begin{enumerate}
 \item The set $(i_k,j_k)_k$ is an entourage in $\Z_+$.
 \item If $(i_k)_k\to \infty$ then $(x_k)_k$ and $(y_k)_k$ have the same limit points.
 \end{enumerate}
 This makes $\digamma(Y)$ a coarse space.
\end{defn}

\begin{thm}
 If $f:X\to Y$ is a continuous map between compact topological spaces
 \begin{itemize}
 \item then it induces a coarse map by
 \f{
  \digamma(f):\digamma(X)&\to \digamma(Y)\\
  (x,i)&\mapsto (f(x),i)
}
\item If $f$ is a homeomorphism then $\digamma(f)$ is a coarse equivalence.
\end{itemize}
\end{thm}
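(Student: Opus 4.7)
The plan is to verify the two defining conditions of a coarse map for $\digamma(f)$ directly from the entourage definition, and then deduce the homeomorphism case from functoriality of $\digamma$ on the underlying set maps.

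For coarse uniformity, I would fix an entourage $E\s\digamma(X)^2$ and examine an arbitrary countable subset $((f(x_k),i_k),(f(y_k),j_k))_k$ of $(\digamma(f)\times\digamma(f))(E)$, lifted from $((x_k,i_k),(y_k,j_k))_k\s E$. Condition (1) on the $\Z_+$-coordinates transfers immediately because $\digamma(f)$ is the identity on the second factor. For condition (2), suppose $(i_k)\to\infty$, so that $(x_k)$ and $(y_k)$ share the same set of limit points in $X$. The key step will be the identity
\[
f(\{\text{limit points of }(z_k)\})=\{\text{limit points of }(f(z_k))\}
\]
for every sequence $(z_k)$ in $X$: one inclusion is immediate from continuity, and for the reverse I would take a limit point $p$ of $(f(z_k))$, pass to a subsequence $f(z_{k_m})\to p$, and use compactness of $X$ to extract a further subsequence $z_{k_{m_\ell}}\to q$, whence $f(q)=p$ by continuity. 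Applying this identity to both $(x_k)$ and $(y_k)$ shows their $f$-images have the same limit points, verifying (2).

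Coarse properness reduces to a characterisation of bounded sets. Condition (1) applied to $B\times B$ forces the projection $p_2(B)\s\Z_+$ to be bounded; otherwise one could pick $(i_k)\to\infty$ in $p_2(B)$ against a constant $j_k$ and violate (1). Conversely any $B$ with bounded $\Z_+$-projection is bounded, since then $(i_k)\to\infty$ never occurs and (2) is vacuous. Then $\digamma(f)^{-1}(B)$ has $\Z_+$-projection contained in $p_2(B)$, hence bounded, so $\digamma(f)^{-1}(B)$ is bounded.

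The second bullet is then formal. The assignment $f\mapsto\digamma(f)$ manifestly preserves composition and identities on the underlying maps, so if $f$ is a homeomorphism with inverse $f^{-1}$ the first bullet applied to both $f$ and $f^{-1}$ produces coarse maps $\digamma(f),\digamma(f^{-1})$ whose compositions in either order coincide with the identity coarse map via $\digamma(f)\circ\digamma(f^{-1})=\digamma(f\circ f^{-1})=\digamma(\mathrm{id})$. Hence $\digamma(f)$ is an actual isomorphism of coarse spaces, a fortiori a coarse equivalence. The main obstacle I anticipate is the limit-points identity in the uniformity step, where compactness of $X$ is essential; pure continuity would only give one inclusion, but the entourage condition demands equality of the two limit-point sets.
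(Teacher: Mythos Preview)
Your proof is correct and follows essentially the same strategy as the paper: direct verification of the entourage conditions for uniformity, characterisation of bounded sets via bounded $\Z_+$-projection for properness, and functoriality for the homeomorphism case. Your uniformity argument is in fact slightly more complete than the paper's, which only explicitly treats the special case where $(n_i)$ is strictly monotone and $(x_i)$ converges to a single point, whereas your limit-point identity (using compactness of $X$ for the nontrivial inclusion) handles condition~(2) in full generality.
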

\begin{proof}
 \begin{itemize}
\item We show $\digamma(f)$ is coarsely uniform and coarsely proper. First we show $\digamma(f)$ is coarsely uniform: Suppose $((x_i,n_i),(y_i,m_i))_i$ is a countable entourage in $\digamma(X)$ such that $(n_i)_i$ is a strictly monotone sequence in $\Z_+$ and $(x_i)_i$ converges to $x$. Then $(n_i,m_i)_i$ is an entourage in $\Z_+$ and $(y_i)_i$ converges to $x$. Since $f$ is a continuous map $f(x_i)_i$ and $f(y_i)_i$ both converge to $f(x)$. Thus we can conclude that
\[
((f(x_i),n_i),(f(y_i),m_i))_i
\]
is an entourage in $\digamma(Y)$.

Now we show $\digamma(f)$ is coarsely proper: If $B\s \digamma(Y)$ is bounded we can write $B=\bigcup_i B_i\times i$ with $B_i\s Y,i\in\Z_+$ where the number of $i$ that appear is finite. Then
\[
 \iip f B=\bigcup_i \iip f {B_i}\times i
\]
is bounded.
\item if $f$ is a homeomorphism then there is a topological inverse $g:Y\to X$ of $f$. Now $f\circ g=id_Y$ and $g\circ f=id_X$. Then
\f{
\digamma(f)\circ \digamma(g)
&=\digamma(f\circ g)\\
&=\digamma(id_Y)\\
&=id_{\digamma(Y)}
}
and
\f{
\digamma(g)\circ \digamma(f)
&=\digamma(g\circ f)\\
&=\digamma(id_X)\\
&=id_{\digamma(X)}
}
 \end{itemize}
\end{proof}

\begin{cor}
 Denote by $\ktopology$ the category of compact topological spaces and continuous maps and by $\coarse$ denote the category of coarse spaces and coarse maps modulo close. Then $\digamma$ is a functor
 \[
  \digamma:\ktopology\to \coarse
 \]
\end{cor}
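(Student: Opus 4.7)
The plan is to read off this corollary from the preceding theorem together with a direct verification of the two functor axioms. The definition of $\digamma$ already specifies what the assignment does on objects (a compact space $Y$ goes to the coarse space $Y\times \Z_+$ with the specified coarse structure) and on morphisms (a continuous $f:X\to Y$ is sent to the map $(x,i)\mapsto (f(x),i)$). The preceding theorem provides exactly the content that makes this assignment type-correct: namely, that $\digamma(f)$ is a coarse map whenever $f$ is continuous. So the only remaining work is to check identities and composition, and to confirm that these checks survive passage to the quotient by the close relation.

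For identities, I would evaluate $\digamma(id_Y)(y,i) = (id_Y(y),i) = (y,i) = id_{\digamma(Y)}(y,i)$, which is the identity map on $\digamma(Y)$ on the nose, hence represents the identity in $\coarse$. For composition, given continuous $f:X\to Y$ and $g:Y\to Z$, I would compute
\[
\digamma(g\circ f)(x,i) = ((g\circ f)(x),i) = (g(f(x)),i) = \digamma(g)(f(x),i) = \digamma(g)\bigl(\digamma(f)(x,i)\bigr),
\]
so $\digamma(g\circ f) = \digamma(g)\circ \digamma(f)$ as set-theoretic maps. Both identities hold strictly, so in particular they hold modulo close, which is all that is required for a morphism in $\coarse$.

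Since the target category $\coarse$ has morphisms taken modulo close, I would remark that the assignment on morphisms is well defined on equivalence classes: if $f_1,f_2:X\to Y$ are continuous and equal as continuous maps (there is no nontrivial close relation among continuous maps between compact spaces to quotient by on the source side), the induced coarse maps coincide. There is no genuine obstacle here; the corollary is an immediate bookkeeping consequence of the previous theorem once the object assignment from the definition of the space of rays and the pointwise formulas above are put together.
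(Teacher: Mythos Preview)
Your proposal is correct and matches the paper's approach: the paper states this corollary without proof, since the functor axioms $\digamma(g\circ f)=\digamma(g)\circ\digamma(f)$ and $\digamma(id_Y)=id_{\digamma(Y)}$ were already computed verbatim in the proof of the preceding theorem (item on homeomorphisms), and the theorem's first item is precisely what makes the morphism assignment land in $\coarse$. Your explicit verification is just the unpacking the paper leaves implicit.
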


\begin{prop}
\label{prop:unit}
Denote by $\sheaff_0$ a coarse ultrafilter on $\Z_+$, the choice is not important. For every $y\in Y$ denote by $i_y$ the inclusion $y\times\Z_+\to \digamma(Y)$. The map
\f{
 \eta_Y:Y&\to \nu'\circ \digamma(Y)\\
 y&\mapsto \corona{i_y}(\sheaff_0)
 }
for every metric space $Y$ defines a natural transformation $\eta:\mathbb{1}_{\ktopology} \to \nu'\circ \digamma$.
\end{prop}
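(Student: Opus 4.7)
The proof requires three steps: well-definedness of each $\eta_Y$, naturality of the family in $Y$, and continuity of each $\eta_Y$. I expect the first two to be essentially formal; continuity is the substantive point.

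For well-definedness, the inclusion $i_y:\Z_+\to\digamma(Y)$ is a coarse map: an entourage $\{(i,j):|i-j|\le C\}$ of $\Z_+$ pushes forward to pairs $((y,i),(y,j))$ that satisfy both conditions in the definition of entourages of $\digamma(Y)$ (bounded $\Z_+$-difference, and constant $Y$-coordinate with matching limit points), and bounded subsets of $\digamma(Y)$ pull back to finite subsets of $\Z_+$. Lemma~\ref{lem:cucoarsemap} then gives that $(i_y)_*\sheaff_0$ is a coarse ultrafilter, so $\eta_Y(y)\in\corona{\digamma(Y)}$ is a bona fide point, independent of the chosen representative of $\sheaff_0$ modulo asymptotic alikeness.

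Naturality reduces to the identity $\digamma(f)\circ i_y = i_{f(y)}$ for any continuous $f:Y\to Y'$, since both sides send $n\in\Z_+$ to $(f(y),n)$. Functoriality of $\nu'$ then gives
\[
\nu'(\digamma(f))(\eta_Y(y)) = \corona{\digamma(f)\circ i_y}(\sheaff_0) = \corona{i_{f(y)}}(\sheaff_0) = \eta_{Y'}(f(y)),
\]
which is exactly commutativity of the naturality square.

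The main obstacle is continuity of $\eta_Y$. Since the topology on $\corona{\digamma(Y)}$ is generated by the complements $(\closedop A)^c$, it suffices to show $\iip{\eta_Y}{\closedop A}$ is closed in $Y$ for every $A\s\digamma(Y)$. The plan is to characterize $\eta_Y(y)\in\closedop A$ in coarse-geometric terms: by unpacking the pushforward, membership corresponds to the existence of a representative $\sheafg$ of the class $[(i_y)_*\sheaff_0]$ with $A\in\sheafg$, which in turn reduces to $A$ being close in $\digamma(Y)$ to $\{y\}\times R$ for every $R\in\sheaff_0$; reading off the entourage definition of $\digamma(Y)$, this is equivalent to the existence, for each such $R$, of a sequence $(x_k,m_k)\in A$ with $x_k\to y$, $m_k\to\infty$, and $d(m_k,R)$ bounded. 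Given a net $y_\alpha\to y_0$ in $Y$ with each $\eta_Y(y_\alpha)\in\closedop A$, I would run a diagonal argument over the witnessing sequences: condition~(2) of the coarse structure on $\digamma(Y)$ — that $\Z_+$-coordinates going to infinity force the $Y$-coordinates of paired sequences to share limit points — is precisely what transfers closeness from the $y_\alpha$'s to $y_0$, and compactness of $Y$ supplies the limit points needed to combine individual witnesses into one for $y_0$. Executing this diagonal construction cleanly is where the technical weight of the proof rests.
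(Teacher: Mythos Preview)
Your treatment of well-definedness and naturality matches the paper's: both reduce naturality to the identity $\digamma(f)\circ i_y=i_{f(y)}$ and apply functoriality of $\nu'$.

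For continuity the two arguments diverge. The paper argues via nets in the \emph{open} direction: given $y_i\to y$ and a basic open neighborhood $\closedop A^c$ of $\eta_Y(y)$, it extracts a single witness $B\in\sheaff_0$ with $\{y\}\times B\notclose A$ and then asserts that the \emph{same} $B$ satisfies $\{y_i\}\times B\notclose A$ for all sufficiently large $i$, hence $\eta_Y(y_i)\in\closedop A^c$ eventually. Your proposal works in the dual \emph{closed} direction: you characterize $\eta_Y(y)\in\closedop A$ as ``$A\close\{y\}\times R$ for every $R\in\sheaff_0$'' and then plan a diagonal construction to transport witnessing sequences from each $y_\alpha$ to the limit $y_0$. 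Both routes ultimately rest on the same mechanism---condition~(2) in the entourage definition of $\digamma(Y)$ ties closeness of $\{y\}\times B$ to a fixed $A$ to the topology of $Y$---but the paper's contrapositive is lighter: one fixed $B$ serves as a uniform witness, so there is no need to combine sequences across a net. Your diagonal argument should go through, but it carries bookkeeping that the paper's direction sidesteps entirely. A minor point: in your characterization, the conclusion from the entourage axioms is that the $x_k$ have $y$ as a \emph{limit point}, not that $x_k\to y$ outright; this does not affect the argument but is worth stating precisely.
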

\begin{proof}
 If $f:Y\to Z$ is a continuous map between compact spaces we show the diagram
\[
 \xymatrix{
 Y\ar[r]^f\ar[d]_{\eta_Y}
 & Z\ar[d]^{\eta_Z}\\
 \nu'\circ \digamma(Y)\ar[r]_{\nu'\circ \digamma(f)}
 & \nu'\circ \digamma(Z)
 }
\]
commutes.
down and then right: a point $y\in Y$ is mapped by $\eta_Y$ to $\corona{i_y}(\sheaff_0)$. Then
\f{
\nu'\circ \digamma(f)(\corona{i_y}(\sheaff_0))
&=\digamma(f)_*\circ i_{y*}(\sheaff_0)\\
&=(\digamma(f)\circ i_y)_*(\sheaff_0)\\
&=i_{f(y)*}(\sheaff_0)
}
right and then down: a point $y\in Y$ is mapped by $f$ to $f(y)$. Then
\[
 \eta_Z(f(y))=\corona{i_{f(y)}}(\sheaff_0)
\]
 The map $\eta_Y$ is continuous for every compact space $Y$: Let $(y_i)_i$ be a net in $Y$ that converges to $y$. Then $(\corona{i_{y_i}}(\sheaff_0))_i$ converges in $\eta_Y(Y)$ to $\corona{i_y}(\sheaff_0)$: Let $A\s \nu'\circ \digamma (Y)$ be a set such that $\corona{i_y}(\sheaff_0)\in\closedop A^c$. Thus there is some $B\in\sheaff_0$ such that $y\times B\notclose A$. Now for almost all $i$ the relation $(y_i\times B)\notclose A$ holds, thus $\corona {i_{y_i}}(\sheaff_0)\in\closedop A^c$ for almost all $i$.
\end{proof}

\begin{lem}
\label{lem:curay}
Let $X$ be a coarsely geodesic coarsely proper metric space. If $\sheaff$ is a coarse ultrafilter on $X$ there is a coarsely injective coarse map $\rho:\Z_+\to X$ such that $\sheaff\in \closedop {\rho(\Z_+)}$.
\end{lem}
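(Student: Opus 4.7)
The plan is to construct $\rho$ as a ``discretized direction'' of $\sheaff$, using axiom~2 of a coarse ultrafilter together with compactness of spheres in the proper geodesic case, and then to promote the resulting ray to an element of a coarse ultrafilter in the class of $\sheaff$.

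I first reduce: by coarse equivalence I may assume $X$ is proper geodesic, and unbounded (otherwise the statement is vacuous, since axiom~1 applied with $A=A$ would force $E[A]$ unbounded, impossible for a bounded set in a proper space). Fix $p\in X$. Axiom~2 of Definition~\ref{defn:cu} applied to the partition $X=\overline{B(p,n)}\cup X_n$, where $X_n:=\{x:d(p,x)>n\}$, yields $X_n\in\sheaff$ for every $n$, since the closed ball is bounded and so not in $\sheaff$.

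Next I fix by choice a geodesic $\gamma_y:[0,d(p,y)]\to X$ from $p$ to each $y\ne p$. Starting with $A_0=X$, $x_0=p$, I inductively produce $A_{n+1}\in\sheaff$ and $x_{n+1}$ as follows: from $A_n\cap X_{n+1}\in\sheaff$ (same axiom~2 argument), cover the compact sphere $S(p,n+1)$ by finitely many $1$-balls $B(q_1,1),\dots,B(q_k,1)$ and partition $A_n\cap X_{n+1}$ by which ball $\gamma_y(n+1)$ lies in; axiom~2 selects a cell $A_{n+1}$, and I set $x_{n+1}:=q_j$, the center of the chosen ball. The triangle inequality $d(x_n,x_{n+1})\le d(x_n,\gamma_y(n))+d(\gamma_y(n),\gamma_y(n+1))+d(\gamma_y(n+1),x_{n+1})\le 3$ for any $y\in A_{n+1}\subseteq A_n$ bounds consecutive points, while $d(p,x_n)=n$ forces $d(x_n,x_m)\ge|n-m|$. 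Hence $\rho(n):=x_n$ is coarsely uniform, coarsely injective, and coarsely proper.

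The delicate final step is to produce a coarse ultrafilter $\sheafg\lambda\sheaff$ with $\rho(\Z_+)\in\sheafg$, which then gives $[\sheaff]=[\sheafg]\in\closedop{\rho(\Z_+)}$. The natural candidate extends the pairwise-close family $\{\rho(\Z_+)\}\cup\{A_n\}_n$ by the coarse-ultrafilter extension principle of \cite[Proposition~4.7]{Grzegrzolka2018a}: the $A_n$'s are pairwise close by axiom~1 of $\sheaff$, and $\rho(\Z_+)\close A_n$ since the inductive tube property $\gamma_y(k)\in B(x_k,1)$ for $k\le n$ makes $A_n$ approach the ray unboundedly often through choices of $y\in A_m$ with $m\gg n$. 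The main obstacle is then verifying $\sheafg\lambda\sheaff$: one must argue that sharing the cofinal chain $(A_n)$ propagates axiom~1 to any pair $A\in\sheaff$, $B\in\sheafg$ via a common $A_n$, and this is precisely where coarse geodesicity and coarse properness are needed.
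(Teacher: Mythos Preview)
Your construction of the sequence $(x_n)$ and the verification that $\rho$ is a coarsely injective coarse map are fine, and they parallel the paper's construction closely (the paper uses $c$-paths in an $R$-discrete model rather than geodesics in a proper geodesic model, but the idea is the same). The genuine gap is in your last paragraph, and you essentially flag it yourself without closing it.

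The problem is that ``sharing the cofinal chain $(A_n)$'' does \emph{not} force $\sheafg\lambda\sheaff$. For arbitrary $A\in\sheaff$ and $B\in\sheafg$ you only know $A\close A_n$ and $B\close A_n$ for every $n$; but $\close$ is not transitive, so this yields nothing about $A\close B$. Concretely, many inequivalent coarse ultrafilters can contain the same nested chain $(A_n)$: already if the $A_n$'s are wide enough (and in your construction $A_n$ consists of \emph{all} points whose chosen geodesic threads the first $n$ balls, so it can fan out arbitrarily beyond radius $n$), there is no reason the extension $\sheafg$ produced by \cite[Proposition~4.7]{Grzegrzolka2018a} lands in the class of $\sheaff$. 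A smaller but related issue: your argument for $\rho(\Z_+)\close A_n$ tacitly assumes the geodesic choice is hereditary (so that $\gamma_{\gamma_y(m)}=\gamma_y|_{[0,m]}$); otherwise $\gamma_y(m)$ need not belong to $A_n$ even for $y\in A_m$.

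The paper's remedy is exactly the device you avoid: it fixes an honest (set-theoretic) ultrafilter $\sigma$ on an $R$-discrete model that \emph{induces} the coarse ultrafilter $\sheaff$, and performs the cone-refinement using $\sigma$ rather than $\sheaff$. A classical ultrafilter has the dichotomy $A\in\sigma\Leftrightarrow A^c\notin\sigma$ and is closed under finite intersections, so the nested sets $V_i$ selected by $\sigma$ are coherent in a way your $A_n$'s are not, and one can conclude that the image of the ray lies in (a set determining) $\sigma$, hence in $\sheaff$. Working only with the coarse ultrafilter axioms, as you do, loses precisely this sharpness; that is the missing idea.
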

\begin{proof}
Fix an ultrafilter $\sigma$ on $X$ that induces the coarse ultrafilter $\sheaff$. Suppose $X$ is $R$-discrete and $c$-coarsely geodesic for $R,c>0$. We will determine a sequence $(r_i)_i$ of points in $X$ and a sequence $(V_i)_i$ of subsets of $X$. 

Fix a point $x_0\in X$ and define $r_0:=x_0$ and $V_0:=X$. Then define for every $y\in X$ the number $d_0(y)$ to be the minimal length of a $c$-path joining $x_0$ to $y$. We define a relation on points of $X$: $y\le z$ if $d_0(y)\le d_0(z)$ and $y$ lies on a $c$-path of minimal length joining $x_0$ to $z$.

For every $i\in \N$ do: Denote by $C_i:=\{y\in X:d_0(y)=i\}$ and define $W_y:=\{z:y\le z\}\cap V_{i-1}$ for every $y\in C_i\cap V_{i-1}$. Now $V_{i-1}\in \sigma$ and the $W_y$ cover $V_{i-1}$ except for a bounded set. Then there is one $y$ such that $W_y\in \sigma$. Define $V_i:=W_y$ and $r_i:=y$.

Define a map 
\f{
\rho:\Z_+&\to X\\
i&\mapsto r_i.
}
Then $\rho$ is a coarsely injective coarse map with $(\rho(\Z_+))\in \sigma$.
\end{proof}

\section{An alternative Description of the Gromov Boundary}

\begin{thm}
Let $X$ be a proper geodesic hyperbolic metric space. The relation $\sheaff\sim\sheafg$ if $\sheaff,\sheafg\in \closedop {\rho(\Z_+)}$ for some coarsely injective coarse map $\rho:\Z_+\to X$ is an equivalence relation on coarse ultrafilters and the mapping
\f{
q_X:\corona X&\to\partial (X)\\
\sheaff&\mapsto [\rho]\;\;\; \sheaff\in\closedop{\rho(\Z_+)}
}
to the Gromov boundary $\partial (X)$ of $X$ is continuous and respects $\sim$. The induced map on the quotient associated to $\sim$ is a homeomorphism.

If $A\s X$ is a subset then
\[
\partial_X A:=\{[\rho]:\rho(\Z_+)\close A\}
\]
is closed in $\partial (X)$. The $((\partial_X A)^c)_{A\s X}$ constitute a basis for the topology on $\partial (X)$. 
\end{thm}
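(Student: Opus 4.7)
The approach centres on the standard hyperbolic input: two coarsely injective coarse rays $\rho_1,\rho_2\colon\Z_+\to X$ with $\rho_1(\Z_+)\close\rho_2(\Z_+)$ satisfy $[\rho_1]=[\rho_2]$ in $\partial X$ and the two images stay at finite Hausdorff distance (stability of quasi-geodesic rays in a proper geodesic $\delta$-hyperbolic space). With this in hand, $q_X$ is well-defined: if $\rho_1(\Z_+),\rho_2(\Z_+)\in\sheaff$, then axiom~(1) of Definition~\ref{defn:cu} forces $\rho_1(\Z_+)\close\rho_2(\Z_+)$, hence $[\rho_1]=[\rho_2]$. Surjectivity of $q_X$ is immediate by extending $\{\rho(\Z_+)\}$ to a coarse ultrafilter via \cite[Proposition~4.7]{Grzegrzolka2018a} for any representative ray $\rho$ of a chosen boundary point. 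For the equivalence relation I would introduce the trivially transitive auxiliary relation $\sheaff\sim'\sheafg\iff q_X(\sheaff)=q_X(\sheafg)$ and show $\sim\,=\,\sim'$; the inclusion $\sim\,\s\,\sim'$ is immediate from well-definedness, while for $\sim'\,\s\,\sim$, given $\rho_1(\Z_+)\in\sheaff$ and $\rho_2(\Z_+)\in\sheafg$ with $[\rho_1]=[\rho_2]$, ray stability places a cofinite subset of $\rho_2(\Z_+)$ inside an entourage-thickening of $\rho_1(\Z_+)$, and a suitable coarse reparametrisation produces a common ray whose image lies in both $\sheaff$ and $\sheafg$ (reflexivity of $\sim$ itself being Lemma~\ref{lem:curay}).

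The induced bijection $\bar q_X\colon\corona X/\!\sim\,\to\partial X$ is next: I would show that $q_X^{-1}(\partial_X A)$ is closed in $\corona X$ for every $A\s X$. The inclusion $\closedop A\s q_X^{-1}(\partial_X A)$ follows from axiom~(1); for the converse, $\sheaff\notin q_X^{-1}(\partial_X A)$ with representing ray $\rho$ means $\rho(\Z_+)\notclose A$, so a sufficiently large entourage-thickening $C$ of $\rho(\Z_+)$ still satisfies $C\notclose A$ and, by ray stability, no coarse ultrafilter entirely supported on a subset of $C$ lies in $q_X^{-1}(\partial_X A)$, giving an open proximity neighbourhood of $\sheaff$ disjoint from this preimage. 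Compactness of $\corona X/\!\sim$ against Hausdorffness of $\partial X$ then upgrades the continuous bijection $\bar q_X$ to a homeomorphism, and closedness of $\partial_X A$ follows as the image of a closed saturated set under the closed map $\bar q_X$.

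For the basis claim, the family $\{(\partial_X A)^c\}_{A\s X}$ is closed under binary intersection via
\[
(\partial_X A)^c\cap(\partial_X B)^c=(\partial_X(A\cup B))^c,
\]
since $\rho(\Z_+)\notclose A$ and $\rho(\Z_+)\notclose B$ jointly yield $\rho(\Z_+)\notclose(A\cup B)$. To confirm it is a basis, given $[\rho]\in\partial X$ and an open neighbourhood $U$, the complementary closed set $U^c$ corresponds under $\bar q_X$ to a $\sim$-saturated closed subset of $\corona X$; using the geometry and ray stability I would construct $B\s X$ that absorbs in the $\close$-sense every ray representing a boundary point in $U^c$ yet remains coarsely disjoint from $\rho(\Z_+)$, giving $[\rho]\in(\partial_X B)^c\s U$. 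The main obstacle is precisely this last construction together with the ray-reparametrisation step in the first paragraph: both require carefully combining the coarse-ultrafilter axioms~(1)–(2) with the ray-stability lemma of hyperbolic geometry to pass from "different rays ending at the same point" to "a single common ray lying in both ultrafilters". The formal pieces (compact-to-Hausdorff upgrade, finite-intersection closure, surjectivity) are routine once this geometric core is secured.
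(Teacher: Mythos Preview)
Your approach differs substantially from the paper's. Both begin with the same ray dichotomy---two coarsely injective rays $\rho,\sigma:\Z_+\to X$ are either at finite Hausdorff distance or satisfy $\rho(\Z_+)\notclose\sigma(\Z_+)$, proved via \cite[Theorem~6.17]{Roe2003}---and both invoke Lemma~\ref{lem:curay} for reflexivity of $\sim$. From there, however, the paper does \emph{not} analyse preimages of the sets $\partial_X A$ to obtain continuity. Instead it exploits the function-algebra description of both coronas: since every Gromov function is a Higson function \cite[Lemma~6.23]{Roe2003}, the coordinate projection $\Phi_X:\R^{C_h(X)}\to\R^{C_g(X)}$ is continuous and restricts to a surjection $\nu(X)\to\partial(X)$; combined with the homeomorphism $\corona X\cong\nu(X)$ already established and with \cite[Lemma~6.24]{Roe2003} identifying Gromov-convergent nets with rays, this \emph{is} the map $q_X$, and its continuity is automatic. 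The basis description is then derived afterwards by computing the $\sim$-saturation of $\closedop A$ and exhibiting the compactification $gX=X\cup\partial(X)$ with closed-set generators $\bar A\cup\partial_X A$.

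Your purely geometric route avoids the function-algebra input and is more elementary in spirit, but as written the logical order is circular: you argue continuity of $q_X$ by showing each $q_X^{-1}(\partial_X A)$ is closed in $\corona X$, yet this yields continuity only once the $\partial_X A$ are known to generate the closed sets of $\partial(X)$ in its \emph{standard} topology---precisely the basis claim you postpone to the end and describe as the main obstacle. To repair this you should first verify directly, using the usual shadow-type neighbourhood basis of the Gromov boundary, that every standard basic open set has the form $(\partial_X A)^c$; continuity, closedness of $\partial_X A$, and the compact--Hausdorff upgrade then follow in that order. This reordering also absorbs the ``construction of $B$'' you flag as difficult, since that construction is exactly the comparison of $\partial_X A$ with the standard basis. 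The paper sidesteps the whole issue by importing continuity from $\Phi_X$, at the cost of relying on the earlier Higson-corona theorem and on the Roe lemmas.
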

\begin{proof}
Note the first part is already \cite[Lemma~6.23]{Roe2003} which shows the Gromov boundary appears as a quotient of the Higson corona by using the property that every Gromov function is a Higson function. The second part is already \cite[Theorem~9.10] {Grzegrzolka2018a} which defines a coarse proximity structure on $X$ that induces the Gromov compactification. 

Every point $p$ in the Gromov boundary $\partial (X)$ is represented by a coarsely injective coarse map $\rho:\Z_+\to X$: A point in $\partial (X)$ is represented by a geodesic ray $r:\R_+\to X$ as defined in \cite[page~427]{Bridson1999}. By \cite[Lemma~3.1]{Bridson1999} the point $p$ can be represented by a large-scale embedding $\Z_+\to X$. Since $\Z_+,X$ are large-scale geodesic this is the same as a coarsely injective coarse map.

If $\rho,\sigma:\Z_+\to X$ are two coarsely injective coarse maps then either $\rho(\Z_+),\sigma(\Z_+)$ are finite Hausdorff distance apart or $\rho(\Z_+)\notclose\sigma(\Z_+)$: Suppose $\rho(\Z_+)\close\sigma(\Z_+)$. Then there are subsequences $(j_i)_i,(k_i)_i\s \Z_+$ and a constant $R\ge 0$ such that $d(\rho(j_i),\sigma(k_i))\le R$ for every $i$. By \cite[Theorem~6.17]{Roe2003} there exists $S>0$ such that the geodesic joining $\rho(j_i)$ to $\rho(j_{i+1})$ has Hausdorff distance at most $S$ from $\rho(j_i),\rho(j_i+1),\ldots,\rho(j_{i+1})$ and from $\sigma(k_i),\sigma(k_i+1),\ldots,\sigma(k_{i+1})$ for every $i$. Thus we obtain $d(\rho(\Z_+),\sigma(\Z_+))\le 2S$.

By Lemma~\ref{lem:curay} for every coarse ultrafilter $\sheaff$ there exists a coarsely injective coarse map $\rho:\Z_+\to X$ such that $\sheaff\in\closedop {\rho(\Z_+)}$. This implies $\sim $ is an equivalence relation on coarse ultrafilters. Since the equivalence classes are closed the quotient is T1.

We recall \cite[Definition~6.21]{Roe2003}: If $\varphi:X\to \R$ is a continuous function then it is called \emph{Gromov} if for every $\varepsilon>0$ there exists $K>0$ such that $(x|y)_{x_0}>K$ implies $|f(x)-f(y)|<\varepsilon$. We denote by $C_g(X)$ the algebra of Higson functions on $X$.

Now we provide the mapping $q_X$. Note that by \cite[Lemma~6.23]{Roe2003} every Gromov function is a Higson function. Thus there is a mapping
\f{
\Phi_X:\R^{C_h(X)}&\to\R^{C_g(X)}\\
(x_\varphi)_{\varphi\in C_h(X)}&\mapsto(x_\varphi)_{\varphi\in C_g(X)}.
}
Now $\Phi_X(\overline{e_{C_h(X)}(X)}\ohne e_{C_h(X)}(X))\s\overline{e_{C_g(X)}(X)}\ohne e_{C_g(X)}(X)$. In fact this map is surjective. This map associates a net $(x_i)_i$ that is section of a coarse ultrafilter to a net $(x_i)_i$ such that $\lim_i\varphi(x_i)\in\R$ exists for every Gromov function $\varphi$. By \cite[Lemma~6.24]{Roe2003} every such net arises as $\rho(i)_i$ for some coarsely injective coarse map $\rho:\Z_+\to X$. Thus $\rho(i)_i$ is a section of some ultrafilter inducing $\sheaff$ which translates to $\sheaff\in\closedop{\rho(\Z_+)}$. Note the map $q_X$ maps $\sheaff$ to $[\rho]\in\partial (X)$. 

Now $q_X$ respects $\sim$ and by the above it induces a continuous bijection $\corona X/\sim\to \partial (X)$.

We show the second part of the theorem: Denote by $q:\corona X\to \corona X/\sim $ the quotient map associated to $\sim$. Then
\f{
\iip q {\closedop A}
&=\{[\sheaff]:\sheaff\in\closedop A\}\\
&=\{\closedop{\rho(\Z_+)}:\sheaff\in\closedop{\rho(\Z_+)},\sheaff\in \closedop A\}\\
&=\{\closedop{\rho(\Z_+)}:\rho(\Z_+)\close A\}.
}
Then $\{[\rho]:\rho(\Z_+)\close A\}$ is closed in $\partial (X)$. The $\partial_X A=\iip {q_X}{\closedop A}$ generate the closed sets of $\partial (X)$.

We define a topology on $gX:=X\cup \partial (X)$ by declaring
\[
(\bar A\cup \partial_X A)^c
\]
as a base. Then $gX$ is compact: Let $(x_i)_i$ be a net in $gX$. If $(x_i)_i\cap X$ contains a bounded and infinite subsequence then there is a limit point $x\in X$ to which a subsequence converges. If this is not the case and $(x_i)_i\cap X$ is infinite then by \cite[Proposition~22]{Hartmann2019b} there exists a coarsely injective coarse map $\rho:\Z_+\to X$ with $\rho(\Z_+)\close ((x_i)_i\cap X)$. Then a subsequence converges to $[\rho]$. If $(x_i)_i\cap X$ is finite then a subnet of $(x_i)_i$ converges to a point in $\partial (X)$ since $\partial (X)$ is compact.

Now $X,\partial (X)$ appear as subspaces of $gX$. Since $\bar X^g=gX$ the space $gX$ is a compactification of $X$.
\end{proof}

\begin{cor}
If $f:X\to Y$ is a coarse map between hyperbolic proper geodesic metric spaces and if for every coarsely injective coarse map $\rho:\Z_+\to X$ the map 
\[
f\circ\rho:\Z_+\to Y
\]
is coarsely injective then $f$ induces a continuous map $\partial (f):\partial (X)\to \partial (Y)$.
\end{cor}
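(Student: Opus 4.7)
The plan is to realise $\partial(f)$ as the descent of $q_Y\circ \corona{f}$ to the quotient $\corona X/{\sim}=\partial(X)$, using the universal property of the quotient topology supplied by the preceding theorem. That theorem gives continuous surjections $q_X:\corona X\to\partial(X)$ and $q_Y:\corona Y\to\partial(Y)$, and the corona functor already provides a continuous map $\corona{f}:\corona X\to\corona Y$. Once I verify that $q_Y\circ \corona{f}$ is constant on each $\sim$-class of $\corona X$, the universal property immediately delivers a continuous map $\partial(f):\partial(X)\to\partial(Y)$ whose action on classes is $[\rho]\mapsto[f\circ\rho]$.

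The heart of the argument is this $\sim$-compatibility check. Given $\sheaff\sim\sheafg$ in $\corona X$, pick a coarsely injective coarse map $\rho:\Z_+\to X$ with $\sheaff,\sheafg\in\closedop{\rho(\Z_+)}$, so that $\rho(\Z_+)$ lies in both coarse ultrafilters. From $\rho(\Z_+)\s\iip{f}{f(\rho(\Z_+))}$ and stability of coarse ultrafilters under supersets, $f(\rho(\Z_+))\in f_*\sheaff$ and $f(\rho(\Z_+))\in f_*\sheafg$, hence both $\corona{f}(\sheaff)$ and $\corona{f}(\sheafg)$ belong to $\closedop{(f\circ\rho)(\Z_+)}$. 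The composition $f\circ\rho:\Z_+\to Y$ is a coarse map and, by the standing hypothesis, coarsely injective, so it is precisely the kind of witness the relation $\sim$ on $\corona Y$ calls for. Thus $\corona{f}(\sheaff)\sim\corona{f}(\sheafg)$, and applying $q_Y$ gives the desired equality.

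The main obstacle is pinning down this single step, and it is exactly what the hypothesis on $f$ was crafted to provide. Without the guarantee that $f\circ\rho$ is coarsely injective, there need not be any coarsely injective coarse ray in $Y$ passing simultaneously through $\corona{f}(\sheaff)$ and $\corona{f}(\sheafg)$, so their $q_Y$-images could land in distinct classes of $\partial(Y)$ and the candidate $\partial(f)$ would fail to descend. With the check in place, the universal property of the quotient topology produces the desired continuous map $\partial(f):\partial(X)\to\partial(Y)$.
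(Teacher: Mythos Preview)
Your proof is correct and follows essentially the same approach as the paper: descend $q_Y\circ\corona f$ through the quotient $q_X$ via the universal property, after checking that $\corona f$ respects the equivalence relation $\sim$. You in fact supply more detail than the paper on the compatibility check (the paper merely asserts that $\corona f$ maps $\sim$-classes to $\sim$-classes), and your identification of the resulting map as $[\rho]\mapsto[f\circ\rho]$ is exactly what the paper obtains by a uniqueness argument.
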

\begin{proof}
Compare this result with \cite[Theorem~2.8]{Dydak2016} where a visual large-scale uniform map induces a continuos map between Gromov boundaries.

Note that $\corona f$ maps equivalence classes of $\sim$ in $\corona X$ to equivalence classes of $\sim$ in $\corona Y$. Thus if $\sheaff\sim\sheafg$ in $\corona X$ then $q_Y\circ \corona f(\sheaff)=q_Y\circ \corona f(\sheafg)$. This implies there is a unique continuous map $\tilde f:\partial (X)\to\partial (Y)$ such that the following diagram commutes:
 \[
 \xymatrix{
 \corona X\ar[r]^{\corona f}\ar[d]_{q_X} 
 & \corona Y\ar[d]^{q_Y}\\
 \partial (X)\ar[r]_{\tilde f}
 &\partial (Y)
 }
 \]
 Now the map 
 \f{
 \partial (f):\partial X&\to \partial Y\\
 [\rho]&\mapsto [f\circ \rho]
 }
 also makes this diagram commute, thus $\partial (f)=\tilde f$ is continuous by uniqueness.
\end{proof}

\bibliographystyle{utcaps}
\bibliography{mybib}

\address

\end{document}